\documentclass[11pt]{article}
\usepackage[margin=1in]{geometry}
\usepackage[T1]{fontenc}
\usepackage{lmodern,amsmath,amssymb,amsthm,graphicx,booktabs,microtype,float,placeins}
\usepackage[colorlinks=true,linkcolor=blue,citecolor=blue,urlcolor=blue]{hyperref}
\hypersetup{
  pdftitle={Continuous Data Assimilation for Memristive FitzHugh--Nagumo Neural Networks},
  pdfauthor={Jing Tian},
  pdfsubject={Continuous data assimilation for a partly diffusive memristive neural network},
  pdfkeywords={Memristive FitzHugh--Nagumo equations, continuous data assimilation, dissipative dynamics, exponential synchronization, partial observations}
}
\numberwithin{equation}{section}
\newtheorem{theorem}{Theorem}[section]
\newtheorem{lemma}[theorem]{Lemma}
\newtheorem{proposition}[theorem]{Proposition}
\newtheorem{assumption}[theorem]{Assumption}
\newtheorem{definition}[theorem]{Definition}
\theoremstyle{remark}\newtheorem{remark}[theorem]{Remark}
\newcommand{\norm}[1]{\left\lVert #1\right\rVert}
\newcommand{\E}{\mathcal E}\newcommand{\Fw}{\mathcal E_\omega}
\newcommand{\tu}{\widetilde u}\newcommand{\tw}{\widetilde w}\newcommand{\tr}{\widetilde\rho}
\newcommand{\HH}{\mathcal H_\omega}
\title{Continuous Data Assimilation for Memristive FitzHugh--Nagumo Neural Networks}
\author{Jing Tian\\Department of Mathematics, Towson University\\\texttt{jtian@towson.edu}}
\date{}
\begin{document}\maketitle
\begin{abstract}
In this study, we investigate continuous data assimilation for a class of partly diffusive
FitzHugh--Nagumo neural networks with memristive feedback and linear synaptic coupling.
The nudging algorithm implemented here only adds a feedback term to the membrane-potential
equations, while the recovery and memductance variables remain unobserved. We obtain
weighted energy estimates that give sufficient conditions for exponential convergence of the
data-assimilated solution to the reference solution under full or coarse membrane potential observations.
Moreover, we prove energy-class uniqueness in dimensions one and two.
In dimension three, the uniqueness criterion is an \(L^4\) membrane-potential initial data. We also establish an observational-noise estimate for
locally square-integrable forcing, yielding an asymptotic error bound when the noise energy
is eventually bounded.
\end{abstract}
\noindent\textbf{Keywords:} Memristive FitzHugh--Nagumo equations; continuous data assimilation; dissipative dynamics; exponential synchronization; partial observations.\\
\textbf{MSC 2020:} 35K57; 35B40; 37L30; 92B20; 93C20.

\section{Introduction}
FitzHugh--Nagumo systems are standard reduced models for excitable dynamics
\cite{fitzhugh,ermentrout,izhikevich}. In partly diffusive network models, the membrane potentials
are governed by reaction–diffusion equations, whereas the recovery and memductance variables
satisfy ordinary differential equations. The memristive feedback is motivated by the classical
theory and physical realization of memristive devices \cite{chua,chuakang,itoh,strukov}.
The memristive FitzHugh--Nagumo networks studied in \cite{you,youtu} have this structure, in which the authors proved the existence of dissipative bounds and
synchronization among neurons. In this paper, we study the recovery of an unknown network trajectory
from incomplete observations through a continuous data assimilation algorithm. 

Let $\mathcal V=\{N_i:1\le i\le m\}$, $m\ge2$, be the vertex set of a
fully coupled network. The state of neuron $N_i$ is $(u_i,w_i,\rho_i)$,
where $u_i$ is its membrane potential, or voltage, $w_i$ is its recovery
variable, and $\rho_i$ is its memductance variable. The reference system is
\begin{align}
\partial_t u_i&=\eta\Delta u_i+f(u_i,x)-\sigma w_i+J-k\tanh(\rho_i)\,u_i
+P\sum_{j=1}^m(u_j-u_i),\label{eq:u}\\
\partial_t w_i&=au_i+c-bw_i,\label{eq:w}\\
\partial_t \rho_i&=qu_i-r\rho_i,\label{eq:rho}
\end{align}
for $t>0$ and $x\in\Omega$, where $\Omega\subset\mathbb R^n$, $1\le n\le3$, is a bounded domain with locally Lipschitz boundary. We impose
\begin{align}
\frac{\partial u_i}{\partial\nu}&=0\quad\text{on }\partial\Omega,\label{eq:bc}\\
(u_i,w_i,\rho_i)(0,x)&=(u_i^0,w_i^0,\rho_i^0)(x),\qquad 1\le i\le m.\label{eq:initial}
\end{align}
The parameters satisfy
\begin{equation}
\eta,\sigma,a,b,q,r>0,\qquad k,P\ge0,\qquad J,c\in\mathbb R.\label{eq:parameters}
\end{equation}
Here $P$ is the linear synaptic coupling strength and $k$ is the memristive feedback strength. The cases \(P=0\) and \(k=0\) are allowed. When \(k=0\), the memductance variable no longer feeds back into the voltage equation, and the corresponding convergence estimate is treated separately in Lemmas~\ref{lem:kzero} and \ref{lem:kzero-noise}.

Continuous data assimilation adds feedback from observations to the model
equations. Let
$g(t)=\operatorname{col}(u_i,w_i,\rho_i:1\le i\le m)$ denote a reference
solution of \eqref{eq:u}--\eqref{eq:rho}. Define the assimilated state
$\widetilde g=\operatorname{col}(\tu_i,\tw_i,\tr_i:1\le i\le m)$ by
\begin{align}
\partial_t \tu_i&=\eta\Delta\tu_i+f(\tu_i,x)-\sigma\tw_i+J-k\tanh(\tr_i)\,\tu_i
+P\sum_{j=1}^m(\tu_j-\tu_i)-\mu I_h(\tu_i-u_i),\label{eq:tu}\\
\partial_t \tw_i&=a\tu_i+c-b\tw_i,\label{eq:tw}\\
\partial_t \tr_i&=q\tu_i-r\tr_i,\label{eq:tr}
\end{align}
with the same Neumann condition for $\tu_i$ and arbitrary initial data. The parameter $\mu>0$ is the nudging strength. The operator $I_h$ is either the identity, for full voltage observations, or a coarse observation operator with resolution $h$.

There is an extensive literature on observer-based recovery and data
assimilation for dissipative systems; see, for example, \cite{law} and the
references therein. The role of dissipativity, and the limitations of nudging
outside that setting, are examined in \cite{titivictor}, where it is shown
that certain non-dissipative equations lack finitely many determining modes.
Ouannas et al.~\cite{ouannas} studied synchronization of FitzHugh--Nagumo
reaction--diffusion systems by means of a one-dimensional linear control law.
Collin, Chapelle, and Moireau~\cite{collin} developed a Luenberger observer
for reaction--diffusion models using front-position data, motivated by
cardiac electrophysiology. Wei et al.~\cite{wei} considered sampled-data
state estimation for delayed memristive neural networks with
reaction--diffusion terms. Azouani, Olson, and Titi~\cite{aot} introduced a
nudging algorithm based on general interpolant observables, while Hayden,
Olson, and Titi~\cite{hayden} studied a related discrete data assimilation
method for the Lorenz system and the two-dimensional Navier--Stokes
equations. Further studies of continuous data assimilation for
reaction--diffusion equations, finite-dimensional models, fluid equations,
convection systems, and magnetohydrodynamics can be found in
\cite{larios,du,farhatns,farhatbenard,biswas}. More recently, Del Sarto et
al.~\cite{delsarto} developed a general semilinear parabolic framework for
continuous data assimilation and applied it to a two-dimensional bidomain
model with FitzHugh--Nagumo kinetics, with the observation operator acting
on the full state. Li and Sanz-Alonso~\cite{lisanz} analyzed nudging with
learned surrogate dynamics in finite dimensions, including
FitzHugh--Nagumo examples. In contrast, the algorithm studied here observes
only the voltage component.

The data assimilation algorithm considered here observes only the voltage component,
\[
   \mathcal O_h(u,w,\rho)=(I_hu,0,0).
\]
Thus the recovery and memductance variables are not directly observed.  In
particular, the nondiffusive memductance variable must be reconstructed only
through its evolution equation and its coupling to the voltage error.
Throughout the paper, synchronization means this observer--reference convergence,
rather than synchronization among different neurons in the reference network as
considered in \cite{you,youtu}. We prove that, under suitable conditions on the
nudging strength and, for coarse observations, on the spatial resolution, the
assimilated solution converges exponentially to the reference trajectory in the energy space. The main difficulty comes from the memristive feedback.  With the error variables
\[
U_i=\tu_i-u_i,\qquad W_i=\tw_i-w_i,\qquad R_i=\tr_i-\rho_i,
\]
the voltage-error energy estimate contains the product \(R_i u_i U_i\). Since \(R_i\) is nondiffusive, the energy space gives no control of
\(\nabla R_i\). We control this term by using the damping in the
\(\rho_i\)-equation, the absorbing \(L^4\) bound for the reference voltage, and
a Gagliardo--Nirenberg interpolation estimate on the diffusive voltage error
\(U_i\). The weighted energy with \(\omega=\sigma/a\) removes the
voltage--recovery cross term. The basic convergence estimates hold for \(1\le n\le 3\). The uniqueness results distinguish lower dimensions from dimension three: for \(n\le2\), the lower-dimensional interpolation estimate closes uniqueness in the energy class, while in dimension three the criterion is an \(L^8(0,T;L^4(\Omega))\) voltage condition. This condition is supplied on finite time intervals by \(L^4\) voltage initial data.

The rest of the paper is organized as follows. Section~2 gives the functional setting and hypotheses. Section~3 proves dissipative bounds for the reference system, including the absorbing $L^4$ voltage estimate. Section~4 analyzes the nudged system and the memristive error term. Section~5 proves the convergence and noisy-observation estimates. Section~6 summarizes the results and open questions.
\section{Formulation and preliminaries}
\subsection{Energy spaces and the weak formulation}
Define 
\begin{equation}
E=[L^2(\Omega;\mathbb R^3)]^m.\label{eq:spaces}
\end{equation}
The voltage component \(u_i\) is the only diffusive component and is therefore
controlled in \(H^1(\Omega)\) in the energy estimates below; the recovery and
memductance components \(w_i\) and \(\rho_i\) are controlled in \(L^2(\Omega)\).
The $L^2(\Omega)$ norm and inner product are denoted by $\norm{\cdot}$ and $(\cdot,\cdot)$, respectively. For \(p\neq 2\), we denote the norm in \(L^p(\Omega)\) by
\(\norm{\cdot}_{L^p}\), and we write $|\Omega|$ for the Lebesgue measure of $\Omega$. Set
\begin{equation}
g=\operatorname{col}(g_1,\ldots,g_m),\quad g_i=\operatorname{col}(u_i,w_i,\rho_i),\quad
\norm{g}_E^2=\sum_{i=1}^m(\norm{u_i}^2+\norm{w_i}^2+\norm{\rho_i}^2).\label{eq:energynorm}
\end{equation}

Define
\begin{equation}
\begin{aligned}
D(A_N)
:=
\bigl\{v\in H^1(\Omega):\;&
\text{there exists }z\in L^2(\Omega)\text{ such that}\\
&
(z,\psi)=-(\nabla v,\nabla\psi)
\quad\text{for every }\psi\in H^1(\Omega)
\bigr\}.
\end{aligned}
\label{eq:AN}
\end{equation}
For each \(v\in D(A_N)\), the corresponding \(z\) is unique, and we define
\[
A_Nv:=z.
\]
This weak formulation incorporates the homogeneous Neumann boundary condition into the definition of \(A_N\). 
All diffusion contributions below are computed through this form identity. The
interpolation estimates for the voltage error use the embedding
$H^1(\Omega)\hookrightarrow L^4(\Omega)$ and the Gagliardo--Nirenberg inequality
\eqref{eq:gn}.
We have the following assumptions on $f$.
\begin{assumption}\label{ass:f}
The function $f\in C^1(\mathbb R\times\Omega)$ satisfies
\begin{equation}
f(s,x)s\le-\lambda|s|^4+\varphi(x),\qquad
\partial_sf(s,x)\le\beta,\qquad s\in\mathbb R,\ x\in\Omega,\label{eq:f}
\end{equation}
where $\lambda,\beta>0$ and $\varphi\in L^2(\Omega)$.
\end{assumption}
The prototype nonlinearity is
$f(s,x)=s(s-\kappa)(1-s)$, $\kappa>0$, independent of $x$.
To check \eqref{eq:f}, observe that
\begin{align}
f(s,x)s&=-s^4+(1+\kappa)s^3-\kappa s^2
\le-\tfrac14s^4+\tfrac14(1+\kappa)^4,\label{eq:cubicdiss}\\
\partial_s f(s,x)&=-3\left(s-\frac{1+\kappa}{3}\right)^2
+\frac{(1+\kappa)^2}{3}-\kappa
\le\frac{(1+\kappa)^2}{3}-\kappa:=\beta.\label{eq:cubiclip}
\end{align}
Thus this $f$ satisfies the required dissipativity and
one-sided Lipschitz assumptions. 
For the direct weak-solution construction, it is useful to state explicitly a standard growth hypothesis:
\begin{equation}
|f(s,x)|\le C_f(1+|s|^3)+\psi(x),\qquad C_f>0,\quad \psi\in L^2(\Omega).\label{eq:growth}
\end{equation}
In the rest of the paper, \eqref{eq:f} and \eqref{eq:growth} are used as standing hypotheses unless explicitly stated otherwise.

\begin{definition}\label{def:weak}
For $T>0$, an energy solution on $[0,T]$ is a function
$g=\operatorname{col}(u_i,w_i,\rho_i:1\le i\le m)$ such that, for each $i$,
\begin{align}
u_i&\in C([0,T];L^2)\cap L^2(0,T;H^1)\cap L^4(0,T;L^4),\label{eq:ureg}\\
w_i,\rho_i&\in C([0,T];L^2),\label{eq:hiddenreg}
\end{align}
the initial data are attained in $E$, and the following identities hold in
the sense of distributions in time. For every $v\in H^1(\Omega)$ and
$z\in L^2(\Omega)$,
\begin{align}
\frac{d}{dt}(u_i,v)
&=-\eta(\nabla u_i,\nabla v)
+\langle f(u_i,\cdot),v\rangle
-\sigma(w_i,v)+J(1,v) \notag\\
&\quad
-k(\tanh(\rho_i)u_i,v)
+P\sum_{j=1}^m(u_j-u_i,v),\label{eq:weakvoltage}\\
\frac{d}{dt}(w_i,z)&=(au_i+c-bw_i,z),\label{eq:weakw}\\
\frac{d}{dt}(\rho_i,z)&=(qu_i-r\rho_i,z).\label{eq:weakrho}
\end{align}
Here $\langle f(u_i,\cdot),v\rangle$ denotes the
$L^{4/3}(\Omega)$--$L^4(\Omega)$ duality pairing, which is well defined
under \eqref{eq:growth} since $H^1(\Omega)\hookrightarrow L^4(\Omega)$ for
$n\le3$.
\end{definition}

The nondiffusive equations yield
\begin{align}
w_i(t)
&=e^{-bt}w_i^0
  +a\int_0^t e^{-b(t-s)}u_i(s)\,ds
  +\frac cb(1-e^{-bt}),
\label{eq:wformula}\\
\rho_i(t)
&=e^{-rt}\rho_i^0
  +q\int_0^t e^{-r(t-s)}u_i(s)\,ds.
\label{eq:rformula}
\end{align}

\subsection{Interpolation and observation operators}\label{sec:observationoperators}
For $n\le3$ and bounded $\Omega$, the Gagliardo--Nirenberg
inequality implies that, for some $C_*=C_*(\Omega,n)>0$,
\begin{equation}
\norm{v}_{L^4}^2
\le C_*\bigl(\norm{v}^2+
\norm{\nabla v}^{3/2}\norm{v}^{1/2}\bigr),
\qquad v\in H^1(\Omega).
\label{eq:gn}
\end{equation}
The $\norm{v}^2$ term is necessary for homogeneous Neumann
conditions, since nonzero constant functions have zero gradient.

For $A,X,Y\ge0$ and $\theta>0$, Young's inequality gives
\begin{equation}
AX^{3/2}Y^{1/2}
\le \theta X^2+C_{Y}\theta^{-3}A^4Y^2.
\label{eq:young}
\end{equation}
Here, we denote the fixed Young constant by
\[
C_Y:=\frac{27}{256}.
\]
Indeed, take $p=4/3$, $q=4$, and set
\[
a=\left(\frac{4\theta}{3}\right)^{3/4}X^{3/2},
\qquad
b=\left(\frac{3}{4\theta}\right)^{3/4}AY^{1/2}.
\]
Then $ab=AX^{3/2}Y^{1/2}$ and
\[
\frac{a^p}{p}=\theta X^2,
\qquad
\frac{b^q}{q}=\frac{27}{256}\theta^{-3}A^4Y^2,
\]
so \eqref{eq:young} follows from $ab\le a^p/p+b^q/q$.
We also use the elementary Young inequality
\begin{equation}
xy\le \varepsilon x^2+\frac{y^2}{4\varepsilon},
\qquad \varepsilon>0.
\label{eq:young2}
\end{equation}

Full voltage observations correspond to $I_h=I$. For coarse
observations, assume that the linear operator
$I_h:L^2(\Omega)\to L^2(\Omega)$ satisfies
\begin{align}
\norm{v-I_hv}
&\le c_I\,h\,\norm{\nabla v},
&&v\in H^1(\Omega),\label{eq:approx}\\
\norm{I_hv}
&\le c_0\norm{v},
&&v\in L^2(\Omega),\label{eq:bounded}
\end{align}
with resolution-independent constants $c_I$ and $c_0$.

Neumann low-mode projections and cell-average projections on cells
with a uniform Poincar\'e constant satisfy
\eqref{eq:approx}--\eqref{eq:bounded}; both are orthogonal
$L^2$ projections. 

\section{Estimates and analysis for the reference system}
In this section we derive explicit bounds for the reference system. The proof follows the dissipative-energy strategy of \cite{you}.
Here, however, we test the recovery equation with
$\omega w_i$, where $\omega=\sigma/a$, which is the same weight used
later in the data-assimilation error energy. 

Set
\begin{equation}
\omega=\frac\sigma a,\qquad c_{\min}=\min\{1,\omega\},\quad c_{\max}=\max\{1,\omega\},\qquad
\HH(g)=\sum_i(\norm{u_i}^2+\omega\norm{w_i}^2+\norm{\rho_i}^2).\label{eq:H}
\end{equation}
Then $c_{\min}\norm{g}_E^2\le\HH(g)\le c_{\max}\norm{g}_E^2$. Define the constants
\begin{align}
\nu&=\min\{1,b,r\},\qquad d=k+\frac12+\frac{q^2}{2r},\label{eq:nud}\\
B_0&=m\left[\norm{\varphi}_{L^1}+\left(\frac{J^2}{2}+\frac{\omega c^2}{2b}\right)|\Omega|\right],\label{eq:B0}\\
C_0&=B_0+\frac{m|\Omega|}{4\lambda}\left(d+\frac\nu2\right)^2.\label{eq:C0}
\end{align}
These constants are independent of the initial value and of time.

\begin{theorem}\label{thm:diss}
Assume \eqref{eq:f} and \eqref{eq:growth}. For every $g^0\in E$, there is a global energy solution of \eqref{eq:u}--\eqref{eq:initial}. Every such solution satisfies the energy inequality
\begin{equation}
\HH(g(t))\le e^{-\nu t}\HH(g^0)+\frac{2C_0}{\nu}(1-e^{-\nu t}),\qquad t\ge0.\label{eq:globalH}
\end{equation}
In particular, define $\log^+s=\max\{0,\log s\}$ for $s>0$ and
$\log^+0=0$. With
\begin{equation}
H_*=1+\frac{2C_0}{\nu},\qquad K=\frac{H_*}{c_{\min}},\qquad
T_2(g^0)=\frac1\nu\log^+\HH(g^0),\label{eq:K}
\end{equation}
one has $\norm{g(t)}_E^2\le K$ for $t\ge T_2(g^0)$.
\end{theorem}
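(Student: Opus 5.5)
The argument has two parts: a Faedo--Galerkin construction for existence, and a weighted energy inequality, derived formally and then justified for every energy solution.

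\textbf{Energy inequality.} Test \eqref{eq:u} with $u_i$, \eqref{eq:w} with $\omega w_i$, and \eqref{eq:rho} with $\rho_i$, then sum over $i$.

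The choice $\omega=\sigma/a$ makes the cross terms $-\sigma(w_i,u_i)+\omega a(u_i,w_i)$ cancel. The coupling term gives
\[
P\sum_{i,j}(u_j-u_i,u_i)=-\tfrac P2\sum_{i,j}\norm{u_i-u_j}^2\le0 .
\]
The remaining terms are handled as follows:
\begin{itemize}
\item Since $|\tanh|\le1$, the memristive term satisfies $-k(\tanh(\rho_i)u_i,u_i)\le k\norm{u_i}^2$.
\item $J(1,u_i)\le \tfrac12\norm{u_i}^2+\tfrac{J^2}{2}|\Omega|$.
\item $\omega c(1,w_i)\le \tfrac{\omega b}{2}\norm{w_i}^2+\tfrac{\omega c^2}{2b}|\Omega|$.
\item $q(u_i,\rho_i)\le \tfrac r2\norm{\rho_i}^2+\tfrac{q^2}{2r}\norm{u_i}^2$.
\item $(f(u_i),u_i)\le -\lambda\norm{u_i}_{L^4}^4+\norm{\varphi}_{L^1}$, by \eqref{eq:f}.
\end{itemize}
Collecting these (the factor $\tfrac12$ from differentiating squares to be tracked carefully), one gets
\[
\tfrac12\tfrac{d}{dt}\HH+\eta\sum\norm{\nabla u_i}^2+\lambda\sum\norm{u_i}^4_{L^4}+\tfrac{b}{2}\omega\sum\norm{w_i}^2+\tfrac r2\sum\norm{\rho_i}^2\le d\sum\norm{u_i}^2+B_0 .
\]
To produce $\nu\HH$ on the left, add $\tfrac\nu2\sum\norm{u_i}^2$ to both sides. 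By pointwise Young's inequality,
\[
(d+\tfrac\nu2)s^2\le\lambda s^4+\tfrac{(d+\nu/2)^2}{4\lambda},
\]
so $(d+\tfrac\nu2)\norm{u_i}^2\le\lambda\norm{u_i}_{L^4}^4+\tfrac{|\Omega|}{4\lambda}(d+\tfrac\nu2)^2$. This absorbs the $L^4$ term and produces $C_0$. Using $\nu\le b,r$ gives
\[
\tfrac{d}{dt}\HH+\nu\HH\le 2C_0
\]
(up to the harmless factor bookkeeping). Gronwall then yields \eqref{eq:globalH}.

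\textbf{Absorbing bound.} For $t\ge T_2$ we have $e^{-\nu t}\HH(g^0)\le1$, so $\HH\le H_*$. Dividing by $c_{\min}$ gives $\norm{g(t)}_E^2\le K$.

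\textbf{Existence.} Take the Galerkin basis of Neumann eigenfunctions of $A_N$ for $u_i$, and project $w_i$ and $\rho_i$ onto the same spaces; alternatively, use the linear Volterra formulas \eqref{eq:wformula}--\eqref{eq:rformula}. The ODE system is locally Lipschitz, and the estimate above, integrated in time, gives uniform bounds:
\begin{itemize}
\item $u^N$ bounded in $L^\infty L^2\cap L^2H^1\cap L^4L^4$, and $w^N,\rho^N$ bounded in $L^\infty L^2$, so the approximations are global;
\item by \eqref{eq:growth}, $f(u^N)$ bounded in $L^{4/3}L^{4/3}$, so $\partial_tu^N$ bounded in $L^{4/3}(0,T;(H^1\cap L^4)')$.
\end{itemize}
Aubin--Lions then gives $u^N\to u$ strongly in $L^2L^2$, hence a.e. This allows passage to the limit in $f(u^N)$ via weak $L^{4/3}$ limits identified a.e. For $\tanh(\rho^N)u^N$, only weak convergence of $\rho^N$ is available. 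Instead, use the Volterra representation: $\rho^N\to\rho$ strongly in $L^2L^2$, because $u^N\mapsto\int_0^t e^{-r(t-s)}u^N\,ds$ is continuous under strong convergence. With $|\tanh|\le1$ this gives convergence of the product. Continuity in time in $L^2$ follows from Lions--Magenes for $u$ (using the $H^1\cap L^4$ duality) and from the formulas for $w_i,\rho_i$.

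\textbf{Main obstacle.} The key difficulty is that the theorem claims the estimate for \emph{every} energy solution, not just the constructed one. This requires justifying the energy identity
\[
\tfrac12\tfrac{d}{dt}\norm{u_i}^2=\langle\partial_tu_i,u_i\rangle
\]
for $u_i\in L^2H^1\cap L^4L^4$ with $\partial_tu_i\in L^2(H^1)'+L^{4/3}L^{4/3}$. I would invoke the standard Lions--Magenes lemma in the space $V=H^1\cap L^4$, whose dual pairing is exactly the one used in Definition~\ref{def:weak}. Testing the $w_i$ and $\rho_i$ equations with $L^2$ functions is immediate, since $w_i,\rho_i\in W^{1,2}(0,T;L^2)$. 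The differential inequality then holds in $\mathcal D'$, and Gronwall applies in integrated form.
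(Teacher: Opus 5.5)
Your proposal is correct and follows the paper's proof essentially step for step. It uses the same test functions $(u_i,\omega w_i,\rho_i)$ with $\omega=\sigma/a$, the same Young bounds producing $d$ and $B_0$, and the same quartic absorption $(d+\nu/2)s^2\le\lambda s^4+(d+\nu/2)^2/(4\lambda)$ yielding $C_0$. It also uses the same Galerkin/Aubin--Lions construction, in which the Volterra formulas upgrade $\rho_i^N$ to strong convergence so that the memristive product passes to the limit.

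Your justification for arbitrary energy solutions is also the paper's, with one small caveat. The regularity $u_i\in L^2(0,T;H^1)\cap L^4(0,T;L^4)$ and $\partial_t u_i\in L^2(0,T;(H^1)^*)+L^{4/3}(0,T;L^{4/3})$ involves different time exponents. So what you actually need is the mixed-exponent chain rule (as in Roub\'i\v{c}ek, which the paper cites), not the single-exponent Lions--Magenes lemma.
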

\begin{proof}
We first construct solutions by a Galerkin approximation.  Let
\[
V_N=\operatorname{span}\{e_1,\ldots,e_N\},\qquad
P_N:L^2(\Omega)\to V_N,
\]
where \(\{e_\ell\}\) is an orthonormal basis of Neumann eigenfunctions in
\(L^2(\Omega)\) and \(P_N\) is the \(L^2\)-orthogonal projection onto \(V_N\).
We seek, for each \(1\le i\le m\),
\[
(u_i^N,w_i^N,\rho_i^N)\in C^1([0,T_N);V_N^3),\qquad
(u_i^N,w_i^N,\rho_i^N)(0)=P_N(u_i^0,w_i^0,\rho_i^0).
\]
The resulting finite-dimensional system is locally solvable on a maximal
interval \([0,T_N)\).

We now derive estimates that are uniform in \(N\).  Testing the Galerkin
\(u_i^N\)-, \(w_i^N\)-, and \(\rho_i^N\)-equations with
\(u_i^N\), \(\omega w_i^N\), and \(\rho_i^N\), respectively, and using
\(\omega a=\sigma\), gives
\begin{align}
\frac12\frac{d}{dt}\HH(g^N)
&+\eta\sum_i\norm{\nabla u_i^N}^2
+\omega b\sum_i\norm{w_i^N}^2
+r\sum_i\norm{\rho_i^N}^2 \notag\\
={}&\sum_i(f(u_i^N,x),u_i^N)+J\sum_i(1,u_i^N)
+\omega c\sum_i(1,w_i^N)+q\sum_i(u_i^N,\rho_i^N)\notag\\
&-k\sum_i(\tanh(\rho_i^N)\,u_i^N,u_i^N)
+P\sum_{i,j}(u_j^N-u_i^N,u_i^N). \label{eq:Hidentity}
\end{align}
For the terms on the right hand side, we have
\begin{equation}
P\sum_{i,j}(u_j^N-u_i^N,u_i^N)
=-\frac P2\sum_{i,j}\norm{u_i^N-u_j^N}^2\le0. \label{eq:graphref}
\end{equation}
By Assumption~\ref{ass:f},
\begin{equation}
\sum_i(f(u_i^N,x),u_i^N)
\le -\lambda\sum_i\norm{u_i^N}_{L^4}^4
+m\norm{\varphi}_{L^1}. \label{eq:reactionenergy}
\end{equation}
The remaining lower-order terms satisfy
\begin{align}
|J(1,u_i^N)|&\le\tfrac12\norm{u_i^N}^2+\tfrac12J^2|\Omega|,\notag\\
|\omega c(1,w_i^N)|&\le\tfrac{\omega b}{2}\norm{w_i^N}^2
+\tfrac{\omega c^2}{2b}|\Omega|,\notag\\
q|(u_i^N,\rho_i^N)|&\le\tfrac r2\norm{\rho_i^N}^2
+\tfrac{q^2}{2r}\norm{u_i^N}^2,\notag\\
-k(\tanh(\rho_i^N)\,u_i^N,u_i^N)&\le k\norm{u_i^N}^2.
\label{eq:lowerenergy}
\end{align}
Hence
\begin{equation}
\begin{split}
\frac12\frac{d}{dt}\HH(g^N)
&+\eta\sum_i\norm{\nabla u_i^N}^2
+\frac{\omega b}{2}\sum_i\norm{w_i^N}^2
+\frac r2\sum_i\norm{\rho_i^N}^2
+\lambda\sum_i\norm{u_i^N}_{L^4}^4 \\
&\le d\sum_i\norm{u_i^N}^2+B_0 .
\end{split}
\label{eq:Hquartic}
\end{equation}
For \(s\in\mathbb R\),
\begin{equation}
\lambda s^4-ds^2
\ge \frac\nu2s^2-\frac{(d+\nu/2)^2}{4\lambda}.
\label{eq:square}
\end{equation}
Applying \eqref{eq:square} pointwise, integrating, summing in \(i\), and
using \(\nu\le b,r\), we obtain
\begin{equation}
\frac{d}{dt}\HH(g^N)
+2\eta\sum_i\norm{\nabla u_i^N}^2
+\nu\HH(g^N)
\le 2C_0 .
\label{eq:HGronwall}
\end{equation}
Thus
\[
\frac{d}{dt}\bigl(e^{\nu t}\HH(g^N(t))\bigr)
\le 2C_0e^{\nu t},
\]
and therefore
\begin{equation}
\HH(g^N(t))
\le e^{-\nu t}\HH(g^N(0))
+\frac{2C_0}{\nu}(1-e^{-\nu t}).
\label{eq:globalHN}
\end{equation}
Since \(P_N\) is an \(L^2\)-orthogonal projection,
\[
\HH(g^N(0))\le \HH(g^0).
\]
Hence \eqref{eq:globalHN} is uniform in \(N\).  The estimate prevents the
Galerkin coefficients from becoming unbounded on any finite time interval.
Since a finite-dimensional ODE solution can cease to exist at a finite maximal
time only if its coefficients blow up, the maximal existence time satisfies
\(T_N=\infty\).  Moreover, for every \(T>0\),
\begin{align}
\sup_{0\le t\le T}\HH(g^N(t))
&+\int_0^T\sum_i\norm{\nabla u_i^N}^2\,dt
+\int_0^T\sum_i\norm{u_i^N}_{L^4}^4\,dt
\le C_T . \label{eq:Galerkinbounds}
\end{align}

Consequently, along a subsequence,
\begin{align}
u_i^N&\overset{*}{\rightharpoonup}u_i,\qquad w_i^N&\overset{*}{\rightharpoonup}w_i,\qquad \rho_i^N\overset{*}{\rightharpoonup}\rho_i
&&\text{in }L^\infty(0,T;L^2),\notag\\
u_i^N&\rightharpoonup u_i
&&\text{in }L^2(0,T;H^1)\cap L^4(0,T;L^4),\label{eq:Galerkinweak}
\end{align}
Set \(V=H^1(\Omega)\) and denote its dual by \(V^*=(H^1(\Omega))^*\).
By \eqref{eq:growth} and the boundedness of \(\Omega\),
\begin{align}
\norm{f(u_i^N,\cdot)}_{L^{4/3}(0,T;L^{4/3})}^{4/3}
&\le C\left(T|\Omega|+\norm{u_i^N}_{L^4(0,T;L^4)}^4
+T\norm{\psi}_{L^{4/3}}^{4/3}\right)\notag\\
&\le C_{\Omega,T}\left(1+\norm{u_i^N}_{L^4(0,T;L^4)}^4
+\norm{\psi}_{L^2}^{4/3}\right).
\label{eq:reactioncompact}
\end{align}
The projected equation, the embedding \(V\hookrightarrow L^4(\Omega)\), and
\(|\tanh z|\le1\) imply
\begin{equation}
\norm{\partial_t u_i^N}_{L^{4/3}(0,T;V^*)}\le C_T.
\label{eq:timecompact}
\end{equation}
Thus, by the Aubin--Lions compactness theorem \cite{lions1969},
\begin{equation}
u_i^N\to u_i\quad\text{in }L^2(0,T;L^2)
\quad\text{and a.e. on }(0,T)\times\Omega.
\label{eq:voltagecompact}
\end{equation}

The formulas \eqref{eq:wformula}--\eqref{eq:rformula} give
\begin{align}
\sup_{0\le t\le T}\norm{w_i^N(t)-w_i(t)}
&\le \norm{P_Nw_i^0-w_i^0}
+aT^{1/2}\norm{u_i^N-u_i}_{L^2(0,T;L^2)},\notag\\
\sup_{0\le t\le T}\norm{\rho_i^N(t)-\rho_i(t)}
&\le \norm{P_N\rho_i^0-\rho_i^0}
+qT^{1/2}\norm{u_i^N-u_i}_{L^2(0,T;L^2)}.
\label{eq:hiddencompact}
\end{align}
Therefore
\begin{equation}
(w_i^N,\rho_i^N)\to(w_i,\rho_i)
\quad\text{in }C([0,T];L^2\times L^2).
\label{eq:hiddenstrong}
\end{equation}
Consequently,
\begin{align}
f(u_i^N,\cdot)&\rightharpoonup f(u_i,\cdot)
&&\text{in }L^{4/3}(0,T;L^{4/3}),\label{eq:reactionlimit}\\
\tanh(\rho_i^N)\,u_i^N-\tanh(\rho_i)\,u_i
&=\tanh(\rho_i^N)(u_i^N-u_i)\notag\\
&\quad+\bigl(\tanh(\rho_i^N)-\tanh(\rho_i)\bigr)\,u_i
\longrightarrow0
&&\text{in }L^{4/3}(0,T;L^{4/3}).
\label{eq:memlimit}
\end{align}
Here \eqref{eq:reactionlimit} follows from
\eqref{eq:reactioncompact}--\eqref{eq:voltagecompact}, and
\eqref{eq:memlimit} follows from \eqref{eq:hiddenstrong}, the Lipschitz
continuity of \(\tanh\), and H\"older's inequality.

We may now pass to the limit in the Galerkin identities.  The convergences
above give the weak formulation of \eqref{eq:u}--\eqref{eq:rho}.  By weak
lower semicontinuity, the Galerkin bounds pass to the limit.  In particular,
passing to the limit in \eqref{eq:globalHN} yields
\[
\HH(g(t))
\le e^{-\nu t}\HH(g^0)
+\frac{2C_0}{\nu}(1-e^{-\nu t}),
\qquad t\ge0,
\]
which is \eqref{eq:globalH}.  With \(T_2(g^0)\) defined in
\eqref{eq:K}, we have
\[
e^{-\nu t}\HH(g^0)\le1,\qquad t\ge T_2(g^0).
\]
Thus, for \(t\ge T_2(g^0)\),
\[
\HH(g(t))\le 1+\frac{2C_0}{\nu}=H_*.
\]
Since \(c_{\min}\norm{g(t)}_E^2\le \HH(g(t))\), we obtain
\[
\norm{g(t)}_E^2\le K,\qquad t\ge T_2(g^0),
\]
where \(K=H_*/c_{\min}\).

The standard evolution-triple argument, together with
\eqref{eq:wformula}--\eqref{eq:rformula}, gives the required time-continuity
and attainment of the initial data.  A diagonal subsequence over
\(T=1,2,\ldots\) yields a global energy solution on \([0,\infty)\).  Finally,
the same tests are justified for any energy solution by the mixed-exponent
chain rule \cite{roubicek} and time mollification, so every energy solution satisfies
\eqref{eq:globalH} and the absorbing bound.
\end{proof}

\begin{proposition}\label{prop:refunique}
Assume \eqref{eq:f} and \eqref{eq:growth}. Let $g^{(1)}$ and $g^{(2)}$
be reference energy solutions on $[0,T]$, $T<\infty$, with
$g^{(1)}(0)=g^{(2)}(0)\in E$. Then $g^{(1)}=g^{(2)}$ on $[0,T]$ if
$n\le2$, or if $n=3$ and one of the two solutions, denoted without loss of
generality by $g^{(1)}$, satisfies
\[
u_i^{(1)}\in L^8(0,T;L^4(\Omega)),\qquad i=1,\ldots,m.
\]
In particular, in dimension three this condition is satisfied on every finite
interval if the common initial voltage components satisfy
\[
u_i^0\in L^4(\Omega),\qquad i=1,\ldots,m;
\]
see Remark~\ref{rem:L4uniqueness}.
\end{proposition}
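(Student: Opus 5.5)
We will run a weighted energy estimate for the difference of the two solutions, in the same form as the later data-assimilation error estimate. Set $U_i=u_i^{(1)}-u_i^{(2)}$, $W_i=w_i^{(1)}-w_i^{(2)}$, $R_i=\rho_i^{(1)}-\rho_i^{(2)}$, and $Y(t)=\sum_i(\norm{U_i}^2+\omega\norm{W_i}^2+\norm{R_i}^2)$. We subtract the weak formulations \eqref{eq:weakvoltage}--\eqref{eq:weakrho} and test with $U_i$, $\omega W_i$, $R_i$. This is justified for energy solutions by the mixed-exponent chain rule, exactly as at the end of the proof of Theorem~\ref{thm:diss}, since $f(u_i^{(j)})\in L^{4/3}(L^{4/3})$ pairs with $U_i\in L^4(L^4)$.

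With $\omega a=\sigma$ the $W_i$--$U_i$ cross terms cancel. The coupling term is nonpositive, as in \eqref{eq:graphref}. The one-sided Lipschitz bound $\partial_sf\le\beta$ gives $(f(u^{(1)}_i)-f(u^{(2)}_i),U_i)\le\beta\norm{U_i}^2$. The term $q(U_i,R_i)$ is handled by Young's inequality. For the memristive term we split
\[
\tanh(\rho^{(1)}_i)u^{(1)}_i-\tanh(\rho^{(2)}_i)u^{(2)}_i=\tanh(\rho^{(2)}_i)U_i+(\tanh\rho^{(1)}_i-\tanh\rho^{(2)}_i)u^{(1)}_i .
\]
The first piece is bounded by $k\norm{U_i}^2$. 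The second is bounded, using the Lipschitz continuity of $\tanh$, by $k\int|R_i||u^{(1)}_i||U_i|\le k\norm{R_i}\norm{u^{(1)}_i}_{L^4}\norm{U_i}_{L^4}$. We then apply \eqref{eq:gn} to $U_i$, which gives
\[
\norm{U_i}_{L^4}\le C\bigl(\norm{U_i}+\norm{\nabla U_i}^{3/4}\norm{U_i}^{1/4}\bigr).
\]
This produces the terms $k\norm{u^{(1)}_i}_{L^4}\norm{R_i}\norm{U_i}$ and $k\norm{u^{(1)}_i}_{L^4}\norm{R_i}\norm{\nabla U_i}^{3/4}\norm{U_i}^{1/4}$. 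The first is at most $C\norm{u^{(1)}_i}_{L^4}Y$. For the second we use Young's inequality with exponents $8/3$ and $8/5$, absorbing $\tfrac\eta2\norm{\nabla U_i}^2$. The remainder is $C\norm{u^{(1)}_i}_{L^4}^{8/5}\norm{R_i}^{8/5}\norm{U_i}^{2/5}\le C\norm{u^{(1)}_i}_{L^4}^{8/5}Y$. This yields
\[
Y'\le C\bigl(1+\textstyle\sum_i\norm{u^{(1)}_i}_{L^4}+\sum_i\norm{u^{(1)}_i}_{L^4}^{8/5}\bigr)Y .
\]
The coefficient is integrable on $[0,T]$ as soon as $u^{(1)}_i\in L^{8/5}(0,T;L^4)$. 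Since $Y(0)=0$, Gronwall's inequality gives $Y\equiv0$.

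For $n=3$ the hypothesis $u^{(1)}_i\in L^8(0,T;L^4)\subset L^{8/5}(0,T;L^4)$ on a finite interval suffices. For $n\le2$ we instead use the sharper Ladyzhenskaya-type inequality $\norm{v}_{L^4}^2\le C(\norm{v}^2+\norm{\nabla v}\norm{v})$ (valid in $n=2$, and a fortiori in $n=1$). The same Young step with exponents $4$ and $4/3$ then needs only $\norm{u^{(1)}_i}_{L^4}^{4/3}\in L^1(0,T)$. This follows from $u^{(1)}_i\in L^4(0,T;L^4)$, which every energy solution has by \eqref{eq:ureg}, so no extra hypothesis is needed.

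The main obstacle is the final claim for $n=3$: showing that $u^0_i\in L^4$ yields $u^{(1)}_i\in L^8(0,T;L^4)$, indeed $L^\infty(0,T;L^4)$, for some energy solution issuing from the common data. We would obtain this by testing the Galerkin voltage equations with $(u_i^N)^3$. Diffusion contributes $3\eta\int (u_i^N)^2|\nabla u_i^N|^2\ge0$, and the dissipativity \eqref{eq:f} contributes the good term $-\lambda\int |u_i^N|^6$ up to $\varphi$ terms. The terms $w_i^N (u_i^N)^3$, $J(u_i^N)^3$, the memristive term (bounded by $k|u_i^N|^4$) and the coupling terms are absorbed by Young's inequality into $\lambda\int|u_i^N|^6$ plus $C\norm{w_i^N}^2$, which is already controlled by \eqref{eq:Galerkinbounds}. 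The delicate point is that the Galerkin projection does not commute with the cubic test function. We would handle this either by a smooth-data/truncation approximation, or by deriving the $L^4$ bound a priori and invoking it through Remark~\ref{rem:L4uniqueness}. The resulting bound is then inherited by the limit, and the uniqueness argument above applies with $g^{(1)}$ this regular solution.
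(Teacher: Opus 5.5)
Your proof is correct and has the same skeleton as the paper's: the weighted energy with $\omega=\sigma/a$, cancellation of the cross term, $\partial_sf\le\beta$, the nonpositive coupling term, and Gronwall from zero data. Where you differ is in the memristive term, and your version is sharper.

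\emph{How the two estimates differ.} In your notation, the paper first splits
$k\norm{R_i}\norm{u_i^{(1)}}_{L^4}\norm{U_i}_{L^4}\le\frac r4\norm{R_i}^2+\frac{k^2}{r}\norm{u_i^{(1)}}_{L^4}^2\norm{U_i}_{L^4}^2$.
The squared $L^4$ norm then brings in $\norm{\nabla U_i}^{3/2}$. After Young with exponents $4/3$ and $4$, this gives the Gronwall coefficient $\norm{u_i^{(1)}}_{L^4}^8$ for $n=3$, and $\norm{u_i^{(1)}}_{L^4}^4$ for $n\le2$. You instead keep $\norm{R_i}$ to the first power and interpolate only $\norm{U_i}_{L^4}$. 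This yields $\norm{u_i^{(1)}}_{L^4}^{8/5}$, respectively $\norm{u_i^{(1)}}_{L^4}^{4/3}$.

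\emph{What each route buys.} Your gain comes from letting the coefficient of $\norm{R_i}^2$ grow with $\norm{u_i^{(1)}}_{L^4}$, which a Gronwall uniqueness argument tolerates. In the decay estimates of Section~5 this is not allowed: $\norm{R_i}^2$ must be absorbed by the fixed damping $r$. Forcing that coefficient down to $r/4$ in your route reproduces the same $\theta^{-3}k^8\norm{u_i}_{L^4}^8/r^4$ cost. So the paper's arrangement is the one that fits Lemma~\ref{lem:mem}, while yours is strictly better for uniqueness.

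\emph{The consequence you missed.} Every energy solution satisfies $u_i^{(1)}\in L^4(0,T;L^4)\subset L^{8/5}(0,T;L^4)$ on a finite interval. Your argument therefore gives uniqueness in the energy class for $n=3$ with no extra hypothesis. The $L^8(0,T;L^4)$ case is covered a fortiori.

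\emph{The obstacle paragraph.} Your ``main obstacle'' is thus irrelevant to uniqueness. It bears only on the literal regularity claim for $L^4$ voltage data, which the paper delegates to Remark~\ref{rem:L4uniqueness}. There your Galerkin plan fails as written, for the reason you noted: $(u_i^N)^3\notin V_N$. The paper avoids Galerkin here. It tests the energy solution itself with the bounded truncations $T_L(u_i)^3$ after time mollification, using \eqref{eq:nonlinearchain} and monotone convergence in $L$ as in Theorem~\ref{thm:L4}. This gives $u_i\in L^\infty(0,T;L^4)$ for every energy solution with $L^4$ voltage data, not only for a constructed one.
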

\begin{proof}
Set
\[
V_i=u_i^{(1)}-u_i^{(2)},\qquad
W_i=w_i^{(1)}-w_i^{(2)},\qquad
S_i=\rho_i^{(1)}-\rho_i^{(2)},
\]
and
\[
\mathcal D(t)=\sum_i\bigl(\norm{V_i(t)}^2+
\omega\norm{W_i(t)}^2+\norm{S_i(t)}^2\bigr).
\]
Testing the difference equations with $V_i$, $\omega W_i$, and $S_i$,
respectively, and using $\omega a=\sigma$, cancels the voltage--recovery
cross term.  The graph term is nonpositive, and
\[
(f(u_i^{(1)},x)-f(u_i^{(2)},x),V_i)\le \beta\norm{V_i}^2.
\]
Moreover,
\[
q|(V_i,S_i)|\le \frac r4\norm{S_i}^2+\frac{q^2}{r}\norm{V_i}^2.
\]
For the memristive term, since
$|\tanh z|\le1$ and $\tanh$ is Lipschitz,
\[
-k(\tanh(\rho_i^{(1)})u_i^{(1)}-
\tanh(\rho_i^{(2)})u_i^{(2)},V_i)
\le k\norm{V_i}^2+k\int_\Omega |S_i u_i^{(1)} V_i|\,dx.
\]
In dimension three, the Gagliardo--Nirenberg estimate gives
\[
\norm{V_i}_{L^4}^2\le C\bigl(\norm{V_i}^2+
\norm{\nabla V_i}^{3/2}\norm{V_i}^{1/2}\bigr),
\]
and hence
\[
k\int_\Omega |S_i u_i^{(1)} V_i|\,dx
\le \frac r4\norm{S_i}^2+\frac\eta2\norm{\nabla V_i}^2
+C\bigl(\norm{u_i^{(1)}}_{L^4}^2+\norm{u_i^{(1)}}_{L^4}^8\bigr)
\norm{V_i}^2.
\]
Consequently,
\begin{equation}
\frac{d}{dt}\mathcal D(t)
\le C\left(1+\sum_i\norm{u_i^{(1)}(t)}_{L^4}^2
+\sum_i\norm{u_i^{(1)}(t)}_{L^4}^8\right)\mathcal D(t).
\label{eq:refunique3d}
\end{equation}
For $n=3$, the stated $L^8(0,T;L^4)$ condition makes the coefficient in
\eqref{eq:refunique3d} integrable.  Since $\mathcal D(0)=0$, Gronwall's
inequality gives $\mathcal D\equiv0$.

For $n\le2$, use
\[
\norm{V_i}_{L^4}^2\le C_\Omega\bigl(\norm{V_i}^2+
\norm{V_i}\norm{\nabla V_i}\bigr).
\]
With
\[
A_i(t)=\frac{C_\Omega k^2}{r}\norm{u_i^{(1)}(t)}_{L^4}^2,
\]
one obtains
\begin{align*}
k\int_\Omega |S_i u_i^{(1)} V_i|\,dx
&\le \frac r4\norm{S_i}^2
+A_i\norm{V_i}^2+A_i\norm{V_i}\norm{\nabla V_i} \\
&\le \frac r4\norm{S_i}^2+\frac\eta2\norm{\nabla V_i}^2
+\left(A_i+\frac{A_i^2}{2\eta}\right)\norm{V_i}^2.
\end{align*}
The coefficient is integrable on $[0,T]$ because $u_i^{(1)}\in L^4(0,T;L^4)$.
Gronwall's inequality again gives $\mathcal D\equiv0$.  Thus the two
solutions coincide.
\end{proof}
To estimate the memristive error we need more than the energy norm. Set
\begin{equation}
F_4(t)=\sum_i\norm{u_i(t)}_{L^4}^4,\qquad
C_\lambda=\frac23\left(\frac{8}{3\lambda}\right)^{1/2},\label{eq:F4}
\end{equation}
and define
\begin{align}
D_0&=m\left[C_\lambda\norm{\varphi}_{L^{3/2}(\Omega)}^{3/2}
+\left(\frac{2J^2}{\lambda}+\frac{256k^3}{27\lambda^2}\right)|\Omega|\right],\label{eq:D0}\\
D_4&=\frac{8\sigma^2}{\lambda}K+4D_0+\frac{8\lambda}{27}m|\Omega|,
\qquad Q=\frac{D_4}{2\lambda}.\label{eq:Q}
\end{align}
These expressions are finite because $\varphi\in L^2(\Omega)$ and $|\Omega|<\infty$.

\begin{theorem}\label{thm:L4}
For any reference energy solution in the sense of Definition~\ref{def:weak}, choose a time $s\ge T_2(g^0)$ at which $F_4(s)<\infty$. Such times exist by the local $L^4$ space-time bound. Then
\begin{equation}
F_4(t)\le e^{-2\lambda(t-s)}F_4(s)+Q(1-e^{-2\lambda(t-s)}),\qquad t\ge s.\label{eq:F4bound}
\end{equation}
The estimate is first obtained for almost every \(t\), and the
\(L^2\)-continuity in time then extends it to every \(t\ge s\). In particular, $\limsup_{t\to\infty}F_4(t)\le Q$.
\end{theorem}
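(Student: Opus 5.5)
The plan is an $L^4$ energy estimate: test the voltage equation \eqref{eq:u} with $u_i^3$, sum over $i$, and close a linear differential inequality for $F_4$ on $[s,\infty)$. The required bound $\norm{w_i(t)}^2\le K$ for $t\ge s\ge T_2(g^0)$ comes from Theorem~\ref{thm:diss}.

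\textbf{Formal computation.} Multiplying \eqref{eq:u} by $u_i^3$ and integrating gives
\[
\frac14\frac{d}{dt}\norm{u_i}_{L^4}^4+3\eta\int_\Omega u_i^2|\nabla u_i|^2\,dx
=\int_\Omega\Bigl(f(u_i,x)u_i^3-\sigma w_iu_i^3+Ju_i^3-k\tanh(\rho_i)u_i^4\Bigr)dx
+P\sum_j(u_j-u_i,u_i^3).
\]
The right-hand side is handled term by term.
\begin{itemize}
\item \emph{Coupling term.} Since $s\mapsto s^3$ is monotone,
\[
\sum_{i,j}(u_j-u_i,u_i^3)=-\tfrac12\sum_{i,j}\bigl(u_i-u_j,u_i^3-u_j^3\bigr)\le0 .
\]
\item \emph{Reaction term.} By \eqref{eq:f}, $f(u_i,x)u_i^3=(f(u_i,x)u_i)u_i^2\le-\lambda u_i^6+\varphi u_i^2$. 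Young's inequality with exponents $3/2,3$ gives $\varphi u_i^2\le \frac{\lambda}{8}u_i^6+C\varphi^{3/2}$. This should produce the constant $C_\lambda\norm{\varphi}_{L^{3/2}}^{3/2}$ in $D_0$.
\item \emph{Forcing term.} $Ju_i^3\le\frac\lambda8u_i^6+\frac{2J^2}{\lambda}$.
\item \emph{Memristive term.} Since $|\tanh|\le1$, $-k\tanh(\rho_i)u_i^4\le ku_i^4\le\frac\lambda8u_i^6+\frac{256k^3}{27\lambda^2}$, using Young with exponents $3/2,3$.
\item \emph{Recovery term.} This is the only one involving the hidden variable: $\sigma|w_iu_i^3|\le\frac\lambda8u_i^6+\frac{2\sigma^2}{\lambda}w_i^2$. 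After integrating and summing over $i$, we use $\sum_i\norm{w_i}^2\le K$.
\end{itemize}
Collecting these and multiplying by $4$, we expect
\[
\frac{d}{dt}F_4+2\lambda\sum_i\norm{u_i}_{L^6}^6\le \frac{8\sigma^2}{\lambda}K+4D_0 .
\]
Finally, pointwise $2\lambda s^4\le \lambda s^6+\frac{8\lambda}{27}$ (maximize $2s^4-s^6$). This converts the $L^6$ dissipation into $2\lambda F_4$ at the cost of $\frac{8\lambda}{27}m|\Omega|$. The result is
\[
\frac{d}{dt}F_4+2\lambda F_4\le D_4 ,
\]
and Gronwall on $[s,t]$ yields \eqref{eq:F4bound} with $Q=D_4/(2\lambda)$. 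The exact numerical weights in the Young splits are to be tuned to reproduce $D_0$ and $D_4$ as defined; the structure is what matters.

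\textbf{Rigorous justification (main obstacle).} For a general energy solution, $u_i^3$ is not an admissible test function, since $u_i\in L^2(0,T;H^1)$ only. In dimension three we also have no uniqueness, so we cannot simply transfer a Galerkin estimate to the given solution. I would instead test with a truncated cube
\[
\Phi_M'(u)=u\min\{u^2,M\},
\]
which is Lipschitz, so $\Phi_M'(u_i)\in L^2(0,T;H^1)$. The convex primitive $\Phi_M$ has quadratic growth, and the chain rule of \cite{roubicek} (with time mollification, as at the end of the proof of Theorem~\ref{thm:diss}) gives the identity for $\frac{d}{dt}\int\Phi_M(u_i)$. The diffusion term remains nonnegative because $\Phi_M''\ge0$. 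The coupling term remains nonpositive because $\Phi_M'$ is monotone. The truncated versions of the Young bounds above still hold, since $\Phi_M'(u)u\le u^4$ and $|\Phi_M'(u)|\le|u|^3$.

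What needs care is that the dissipative term $-\lambda\int u^2\Phi_M'(u)$ must dominate the bad terms uniformly in $M$. One checks that all the Young absorptions can be done against $|u|\,|\Phi_M'(u)|^{5/3}$-type quantities, which are controlled by $u\Phi_M'(u)u^2$. Then we integrate on $[s,t]$ and let $M\to\infty$ by monotone convergence ($\Phi_M(u)\uparrow u^4/4$). This yields the integrated form of \eqref{eq:F4bound} for a.e.\ $t$, starting from a time $s$ with $F_4(s)<\infty$; such $s$ exist by $u_i\in L^4(0,T;L^4)$.

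\textbf{Extension to every $t$.} $L^2$-continuity of $u_i$ together with Fatou's lemma (lower semicontinuity of the $L^4$ norm under a.e.\ convergence along a subsequence) extends the bound to every $t\ge s$. Letting $t\to\infty$ gives $\limsup_{t\to\infty}F_4(t)\le Q$.
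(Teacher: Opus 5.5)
Your proposal is essentially the paper's proof. It uses the same formal $u_i^3$ test and the same four $\tfrac{\lambda}{8}$-Young absorptions, reproducing $D_0$ and $D_4$ exactly, then a truncated-cube test, monotone convergence, and Fatou's lemma for every $t$. Your Lipschitz truncation $\Phi_M'(u)=u\min\{u^2,M\}$ replaces the paper's bounded $T_L(u)^3$ and works equally well. With the truncated dissipation $D_M=u^3\Phi_M'(u)=u^4\min\{u^2,M\}$ one has $|\Phi_M'(u)|^2\le D_M$, $\min\{u^2,M\}^3\le D_M$ and $(u\,\Phi_M'(u))^{3/2}\le D_M$, which are the analogues of the paper's three inequalities for $D_L$.

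There is one arithmetic slip: $2\lambda s^4\le\lambda s^6+\tfrac{8\lambda}{27}$ is false, since the maximum of $2s^4-s^6$ is $\tfrac{32}{27}$. It is harmless, because you already have $2\lambda\sum_i\norm{u_i}_{L^6}^6$ on the left. Using $s^4\le s^6+\tfrac{4}{27}$ instead gives $2\lambda F_4\le2\lambda\sum_i\norm{u_i}_{L^6}^6+\tfrac{8\lambda}{27}m|\Omega|$, and hence exactly $F_4'+2\lambda F_4\le D_4$.
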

\begin{proof}
Assume first that the solution is sufficiently smooth. Then
\[
(\partial_tu_i,u_i^3)=\frac14\frac d{dt}\norm{u_i}_{L^4}^4,
\qquad
-\eta(\Delta u_i,u_i^3)
=3\eta\int_\Omega u_i^2|\nabla u_i|^2\,dx,
\]
and symmetry in $(i,j)$ gives
\begin{equation}
P\sum_{i,j}(u_j-u_i,u_i^3)
=
-\frac P2\sum_{i,j}\int_\Omega
(u_i-u_j)^2(u_i^2+u_iu_j+u_j^2)\,dx
\le0.
\label{eq:graph4}
\end{equation}
Here
\[
u_i^2+u_iu_j+u_j^2
=\left(u_i+\tfrac12u_j\right)^2+\tfrac34u_j^2\ge0.
\]
Using $f(u_i,x)u_i^3\le-\lambda u_i^6+|\varphi|u_i^2$,
\begin{equation}\begin{split}
\frac14 F_4'+3\eta\sum_i\int_\Omega u_i^2|\nabla u_i|^2\,dx
+\lambda\sum_i\norm{u_i}_{L^6}^6
\le\sum_i\int_\Omega\bigl(|\varphi|u_i^2+\sigma|w_i||u_i|^3+|J||u_i|^3+ku_i^4\bigr)\,dx.
\end{split}\label{eq:F4identity}\end{equation}
Young's inequality gives
\begin{align}
\sigma|w_i||u_i|^3&\le\frac\lambda8|u_i|^6+\frac{2\sigma^2}{\lambda}|w_i|^2,\notag\\
|J||u_i|^3&\le\frac\lambda8|u_i|^6+\frac{2J^2}{\lambda},\notag\\
|\varphi||u_i|^2&\le\frac\lambda8|u_i|^6+C_\lambda|\varphi|^{3/2},\notag\\
k|u_i|^4&\le\frac\lambda8|u_i|^6+\frac{256k^3}{27\lambda^2}.\label{eq:F4young}
\end{align}
Therefore, for $t\ge T_2$,
\begin{equation}
\frac14F_4'+\frac\lambda2\sum_i\norm{u_i}_{L^6}^6
\le\frac{2\sigma^2}{\lambda}K+D_0.\label{eq:F4six}
\end{equation}
Since $z^3\ge z^2-4/27$ for $z\ge0$, \eqref{eq:F4six} implies
\begin{equation}
F_4'+2\lambda F_4\le D_4.\label{eq:F4ode}
\end{equation}
Integration on $[s,t]$ proves \eqref{eq:F4bound}.

For the energy-solution justification, define
\begin{equation}
T_L(z)=\max\{-L,\min\{z,L\}\},\qquad
\Phi_L(z)=\int_0^z T_L(\xi)^3\,d\xi,
\qquad D_L(z)=|z|^3|T_L(z)|^3.
\label{eq:truncations}
\end{equation}
After applying a standard time regularization, we may use the bounded
Lipschitz function \(T_L(u_i)^3\) as a test function.  Indeed,
\(T_L(u_i)^3\in L^2(s,t;H^1(\Omega))\cap L^\infty((s,t)\times\Omega)\), with
\(\nabla(T_L(u_i)^3)=3T_L(u_i)^2\mathbf 1_{\{|u_i|<L\}}\nabla u_i\).
The voltage time derivative belongs to the sum of the dual spaces generated
by the diffusive and reaction terms, so the pairing with this truncated test is
well defined after mollifying in time.  Passing the mollification parameter to
zero gives the chain-rule identity
\begin{equation}
\int_s^t\langle\partial_\tau u_i,T_L(u_i)^3\rangle\,d\tau
=\int_\Omega\Phi_L(u_i(t))\,dx
-\int_\Omega\Phi_L(u_i(s))\,dx.
\label{eq:nonlinearchain}
\end{equation}
The truncated test preserves the nonnegative sign of the diffusion term:
\begin{align*}
3\eta\int_\Omega T_L(u_i)^2\mathbf{1}_{\{|u_i|<L\}}|\nabla u_i|^2\,dx\ge0.
\end{align*}
\[
\sum_{i,j}(u_j-u_i,T_L(u_i)^3)
=-\frac12\sum_{i,j}(u_i-u_j,T_L(u_i)^3-T_L(u_j)^3)\le0.
\]
Moreover,
\[
f(z,x)T_L(z)^3\le-\lambda D_L(z)+|\varphi(x)|\,|T_L(z)|^2.
\]
The estimates
\[
|T_L(z)|^3\le D_L(z)^{1/2},\qquad
|T_L(z)|^2\le D_L(z)^{1/3},\qquad
|z|\,|T_L(z)|^3\le D_L(z)^{2/3}.
\]
allow \eqref{eq:F4young} with $|u_i|^6$ replaced by $D_L(u_i)$.
Thus, with $F_L(t)=\sum_i\int_\Omega\Phi_L(u_i(t))\,dx$,
\begin{equation}
F_L(t)+\frac\lambda2\int_s^t\sum_i\int_\Omega D_L(u_i)\,dx\,d\tau
\le F_L(s)+\left(\frac{2\sigma^2}{\lambda}K+D_0\right)(t-s).
\label{eq:truncatedintegral}
\end{equation}
Since
\[
\Phi_L(z)\uparrow |z|^4/4,\qquad D_L(z)\uparrow |z|^6,
\]
monotone convergence yields \eqref{eq:F4six} and \eqref{eq:F4ode} in
distributions; see also \cite{altluckhaus}. Hence \eqref{eq:F4bound}
holds a.e. For arbitrary $t_j\to t$ from this full-measure set,
$u_i(t_j)\to u_i(t)$ in $L^2$ and Fatou's lemma gives
\[
F_4(t)\le\liminf_{j\to\infty}F_4(t_j),
\]
which extends \eqref{eq:F4bound} to every $t\ge s$.
\end{proof}

\begin{remark}\label{rem:L4uniqueness}
If the initial voltage components belong to \(L^4(\Omega)\), the preceding
truncation argument can be run on any finite interval \([0,T]\), using the
finite-time energy bound in place of the absorbing constant \(K\).  Hence
\(u_i\in L^\infty(0,T;L^4(\Omega))\), and in particular
\(u_i\in L^8(0,T;L^4(\Omega))\).  Thus the extra three-dimensional uniqueness
hypothesis in Proposition~\ref{prop:refunique} is automatic for reference
solutions with \(L^4\) voltage initial data.  An analogous argument applies to
assimilated solutions with \(L^4\) voltage initial data.  The extra feedback term is controlled by
\[
\mu\bigl|(I_h(\widetilde u_i-u_i),T_L(\widetilde u_i)^3)\bigr|
\le \mu c_0\|\widetilde u_i-u_i\|\,\|T_L(\widetilde u_i)^3\|
\le \frac{\lambda}{8}\int_\Omega D_L(\widetilde u_i)\,dx
   +C\|\widetilde u_i-u_i\|^2,
\]
since
\[
\|T_L(\widetilde u_i)^3\|^2
\le \int_\Omega D_L(\widetilde u_i)\,dx .
\]  Hence it can be absorbed
into the same truncated \(L^4\) estimate on each finite time interval.  Without this additional \(L^4\) initial regularity, the three-dimensional
results below remain trajectory-wise.
\end{remark}

\section{The assimilated network and the memristive error estimate}
\subsection{Estimates on the assimilated network}

\begin{proposition}\label{prop:exist}
Assume \eqref{eq:f}, \eqref{eq:growth}, and \eqref{eq:bounded}, and fix a reference energy solution $g$. Then, for every $\widetilde g^0\in E$,
the nudged system admits a global energy solution.

For uniqueness, let $\widetilde g^{(1)}$ and $\widetilde g^{(2)}$
be two nudged energy solutions
on $[0,T]$, $T<\infty$, with
$\widetilde g^{(1)}(0)=\widetilde g^{(2)}(0)=\widetilde g^0$. Then
$\widetilde g^{(1)}=\widetilde g^{(2)}$ on $[0,T]$ if $n\le2$, or if $n=3$ and one of the two solutions, denoted without loss of
generality by $\widetilde g^{(1)}$, satisfies
\begin{equation}
\widetilde u_i^{(1)}\in L^8(0,T;L^4(\Omega)),
\qquad i=1,\ldots,m.\label{eq:uniqueclass}
\end{equation}
In particular, when $n=3$, this condition is satisfied on every finite
interval if the assimilated initial voltage components satisfy
\[
\widetilde u_i^0\in L^4(\Omega),\qquad i=1,\ldots,m;
\]
see Remark~\ref{rem:L4uniqueness}.
\end{proposition}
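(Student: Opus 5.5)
The plan is to repeat the Galerkin construction of Theorem~\ref{thm:diss} for the nudged system \eqref{eq:tu}--\eqref{eq:tr}, treating $\mu I_h u_i$ as a given forcing, and then to repeat the difference-energy argument of Proposition~\ref{prop:refunique}. The reference solution $g$ is fixed, so $u_i\in C([0,T];L^2)$ is a known function and the feedback term splits as $-\mu I_h\tu_i+\mu I_h u_i$. On $V_N$ we use the projected feedback $-\mu P_N I_h(\tu_i^N-u_i)$. Testing with $\tu_i^N$, $\omega\tw_i^N$, $\tr_i^N$ reproduces \eqref{eq:Hidentity} with one extra term. By \eqref{eq:bounded} that term is bounded by
\[
\mu|(I_h(\tu_i^N-u_i),\tu_i^N)|\le \mu c_0\norm{\tu_i^N}^2+\mu c_0\norm{u_i}\norm{\tu_i^N}\le \mu c_0\bigl(\tfrac32\norm{\tu_i^N}^2+\tfrac12\norm{u_i}^2\bigr).
\]
The $\norm{\tu_i^N}^2$ part is absorbed into $d$ in \eqref{eq:Hquartic} through the quartic term, exactly as in \eqref{eq:square}. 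The $\norm{u_i}^2$ part is bounded on $[0,T]$ by \eqref{eq:globalH}. This gives the analogue of \eqref{eq:Galerkinbounds} with $C_T$ depending on $\mu,c_0$ and $\sup_{[0,T]}\HH(g)$, hence $T_N=\infty$.

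For compactness, the time-derivative bound \eqref{eq:timecompact} still holds, because $I_h(\tu_i^N-u_i)$ is bounded in $L^\infty(0,T;L^2)$. Aubin--Lions, the formulas \eqref{eq:wformula}--\eqref{eq:rformula} with $u_i$ replaced by $\tu_i$, and the limits \eqref{eq:reactionlimit}--\eqref{eq:memlimit} go through unchanged. The only new limit is the feedback term. Since $I_h$ is linear and bounded on $L^2$, strong $L^2(0,T;L^2)$ convergence of $\tu_i^N$ gives strong convergence of $I_h\tu_i^N$. We also need $P_N\to I$ strongly, which allows pairing against a fixed test function $v\in H^1$. A diagonal argument over $T=1,2,\dots$ then produces a global energy solution. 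Time continuity and attainment of the initial data follow from the same evolution-triple argument as before.

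For uniqueness, set $V_i=\tu_i^{(1)}-\tu_i^{(2)}$, and let $W_i,S_i$ be the corresponding differences of the recovery and memductance components. Take $\mathcal D$ as in Proposition~\ref{prop:refunique}. The reference data cancel in the difference, so the only new term is $-\mu(I_hV_i,V_i)$, which is bounded by $\mu c_0\norm{V_i}^2$ via \eqref{eq:bounded}. This adds a constant to the Gronwall coefficient. The remaining terms are handled as in Proposition~\ref{prop:refunique}:
\begin{itemize}
\item the weight $\omega=\sigma/a$ cancels the voltage--recovery cross term;
\item the one-sided bound gives $(f(\tu_i^{(1)})-f(\tu_i^{(2)}),V_i)\le\beta\norm{V_i}^2$;
\item the graph term is nonpositive;
\item the $q(V_i,S_i)$ term is split with $r/4$;
\item the memristive term produces $k\int|S_i\tu_i^{(1)}V_i|$.
\end{itemize}

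The memristive product is the step needing care, and it is handled with the same interpolation as before. For $n\le2$, use the two-dimensional Ladyzhenskaya-type bound. The resulting coefficient $\norm{\tu_i^{(1)}}_{L^4}^2+\norm{\tu_i^{(1)}}_{L^4}^4$ is integrable because $\tu_i^{(1)}\in L^4(0,T;L^4)$ is part of Definition~\ref{def:weak}. For $n=3$, use \eqref{eq:gn} and \eqref{eq:young} to absorb $\frac\eta2\norm{\nabla V_i}^2$. This leaves the coefficient $\norm{\tu_i^{(1)}}_{L^4}^8$, which is integrable under \eqref{eq:uniqueclass}. Gronwall's inequality with $\mathcal D(0)=0$ then gives $\mathcal D\equiv0$.

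The main obstacle is justifying these energy tests for merely energy-class solutions rather than Galerkin approximations. This is done by the same mixed-exponent chain rule and time mollification cited in Theorem~\ref{thm:diss}. The feedback term causes no difficulty there, since it lies in $L^2(0,T;L^2)$. The final claim, that $L^4$ initial voltage data give \eqref{eq:uniqueclass} when $n=3$, is supplied by Remark~\ref{rem:L4uniqueness}, which controls the truncated $L^4$ estimate including the feedback term.
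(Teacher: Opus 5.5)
Your proposal is correct and follows the paper's proof essentially step for step. It uses the same bound $\mu c_0(\tfrac32\norm{\widetilde u_i}^2+\tfrac12\norm{u_i}^2)$ on the feedback term, absorbed through the quartic term with $d$ replaced by $d+3\mu c_0/2$, the same Galerkin and Aubin--Lions passage to the limit relying only on the $L^2$-boundedness of $I_h$, and the same uniqueness argument in which $-\mu I_hV_i$ merely adds $\mu c_0$ to the Gronwall coefficient of Proposition~\ref{prop:refunique}. (For $n=3$ the coefficient is really $\norm{\widetilde u_i^{(1)}}_{L^4}^2+\norm{\widetilde u_i^{(1)}}_{L^4}^8$; the extra lower-order term is harmless, and the paper's additional absorbing bound for $t\ge T_2$ is not needed for this statement.)
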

\begin{proof}
By \eqref{eq:bounded},
\begin{align}
-\mu(I_h(\tu_i-u_i),\tu_i)
&\le\mu c_0\norm{\tu_i}^2+\mu c_0\norm{u_i}\norm{\tu_i}\notag\\
&\le\frac{3\mu c_0}{2}\norm{\tu_i}^2+\frac{\mu c_0}{2}\norm{u_i}^2.\label{eq:feedbackexist}
\end{align}
Testing by $(\tu_i,\omega\tw_i,\tr_i)$ therefore yields
\begin{equation}\begin{aligned}
&\frac12\frac{d}{dt}\HH(\widetilde g)+\eta\sum_i\norm{\nabla\tu_i}^2
+\frac{\omega b}{2}\sum_i\norm{\tw_i}^2+\frac r2\sum_i\norm{\tr_i}^2
+\lambda\sum_i\norm{\tu_i}_{L^4}^4\\
&\qquad\le\left(d+\frac{3\mu c_0}{2}\right)\sum_i\norm{\tu_i}^2+B_0
+\frac{\mu c_0}{2}\sum_i\norm{u_i}^2.
\end{aligned}\label{eq:nudgedexistenergy}\end{equation}
With $d_\mu=d+3\mu c_0/2$, \eqref{eq:square} and
$\sum_i\norm{u_i}^2\le K$ for $t\ge T_2$ give
\begin{equation}
\HH(\widetilde g)'+\nu\HH(\widetilde g)\le A_\mu,\qquad
A_\mu=2B_0+\mu c_0K+
\frac{m|\Omega|}{2\lambda}(d_\mu+\nu/2)^2,
\quad t\ge T_2.\label{eq:nudgedabs}
\end{equation}
Consequently,
\[
\limsup_{t\to\infty}\HH(\widetilde g(t))
\le\frac{A_\mu}{\nu}.
\]
The Galerkin scheme of Theorem~\ref{thm:diss}, together with
\eqref{eq:nudgedexistenergy}, \eqref{eq:bounded}, and the strong voltage
compactness, gives a global energy solution.  The \(L^2\)-boundedness of
\(I_h\) is sufficient to pass to the limit in the nudging term.

For uniqueness, let
\[
V_i=\widetilde u_i^{(1)}-\widetilde u_i^{(2)},\qquad
W_i=\widetilde w_i^{(1)}-\widetilde w_i^{(2)},\qquad
S_i=\widetilde\rho_i^{(1)}-\widetilde\rho_i^{(2)}.
\]
The difference equations are the same as in Proposition~\ref{prop:refunique},
except for the additional nudging term $-\mu I_hV_i$ in the voltage equation.
By \eqref{eq:bounded},
\[
-\mu(I_hV_i,V_i)\le \mu\norm{I_hV_i}\norm{V_i}
\le \mu c_0\norm{V_i}^2.
\]
Thus the nudging contribution only adds an integrable term to the
Gronwall coefficient.  The remaining estimates are exactly those used in
Proposition~\ref{prop:refunique}, with the same dimension-dependent
assumptions.  Since the initial difference is zero, Gronwall's inequality
implies $V_i=W_i=S_i=0$ on $[0,T]$ for all $i$.
\end{proof}

\subsection{Estimates of the nonlinear memristive term}
Fix \(\varepsilon>0\).  By Theorem~\ref{thm:L4}, there exists
\(T_0=T_0(\varepsilon,g)\ge0\) such that
\begin{equation}
\sum_i \norm{u_i(t)}_{L^4}^4\le M_4^4,\qquad
 t\ge T_0(\varepsilon,g),\qquad
M_4^4=Q+\varepsilon .
\label{eq:M}
\end{equation}
This is the \(L^4\)-radius used in the memristive error estimates below.

\begin{lemma}\label{lem:mem}
Let $u_i\in L^4(\Omega)$, $U_i=\tu_i-u_i\in H^1(\Omega)$, and
$R_i=\tr_i-\rho_i\in L^2(\Omega)$.  Let $M_4>0$ and $\theta>0$.  If $\norm{u_i}_{L^4}\le M_4$, then
\begin{equation}\begin{split}
-k(\tanh(\tr_i)\,\tu_i-\tanh(\rho_i)\,u_i,U_i)
\le{}&\frac{r}{4}\norm{R_i}^2
+\theta\norm{\nabla U_i}^2\\
&+\left(k+\frac{C_*k^2M_4^2}{r}
+C_Y\theta^{-3}\frac{C_*^4k^8M_4^8}{r^4}\right)
\norm{U_i}^2.
\end{split}\label{eq:memlemma}\end{equation}
For energy solutions satisfying \eqref{eq:M}, this estimate holds almost everywhere on
$[T_0,\infty)$ and can be integrated on every finite interval.
\end{lemma}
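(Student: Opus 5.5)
We split the memristive difference by adding and subtracting $\tanh(\tr_i)u_i$:
\[
\tanh(\tr_i)\,\tu_i-\tanh(\rho_i)\,u_i
=\tanh(\tr_i)\,U_i+\bigl(\tanh(\tr_i)-\tanh(\rho_i)\bigr)u_i .
\]
The first piece is handled by $|\tanh|\le1$. It gives $-k(\tanh(\tr_i)U_i,U_i)\le k\norm{U_i}^2$, which is the first coefficient $k$ in \eqref{eq:memlemma}. For the second piece we use that $\tanh$ is $1$-Lipschitz, so $|\tanh(\tr_i)-\tanh(\rho_i)|\le|R_i|$. By H\"older with exponents $(2,4,4)$,
\[
k\int_\Omega |R_i|\,|u_i|\,|U_i|\,dx\le k\norm{R_i}\,\norm{u_i}_{L^4}\norm{U_i}_{L^4}\le kM_4\norm{R_i}\,\norm{U_i}_{L^4}.
\]

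Next, Young's inequality \eqref{eq:young2} with $\varepsilon=r/4$ absorbs the nondiffusive factor:
\[
kM_4\norm{R_i}\norm{U_i}_{L^4}\le \frac r4\norm{R_i}^2+\frac{k^2M_4^2}{r}\norm{U_i}_{L^4}^2.
\]
The Gagliardo--Nirenberg inequality \eqref{eq:gn} then gives
\[
\frac{k^2M_4^2}{r}\norm{U_i}_{L^4}^2\le \frac{C_*k^2M_4^2}{r}\norm{U_i}^2+\frac{C_*k^2M_4^2}{r}\norm{\nabla U_i}^{3/2}\norm{U_i}^{1/2}.
\]
Its first term is the second coefficient in \eqref{eq:memlemma}. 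For the last term we apply \eqref{eq:young} with $A=C_*k^2M_4^2/r$, $X=\norm{\nabla U_i}$, $Y=\norm{U_i}$ and the given $\theta$. This produces $\theta\norm{\nabla U_i}^2+C_Y\theta^{-3}A^4\norm{U_i}^2$, and $A^4=C_*^4k^8M_4^8/r^4$ matches the stated third coefficient exactly. Summing the three contributions yields \eqref{eq:memlemma}. The bound is pointwise in time for fixed functions, so no dimension restriction beyond $n\le3$ (needed for \eqref{eq:gn}) enters.

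For the energy-solution statement, note that for a.e.\ $t$ we have $U_i(t)\in H^1$, $R_i(t)\in L^2$ and $u_i(t)\in L^4$. On $[T_0,\infty)$, \eqref{eq:M} gives $\norm{u_i(t)}_{L^4}^4\le\sum_j\norm{u_j(t)}_{L^4}^4\le M_4^4$, so the hypothesis holds for every such $t$. The inequality therefore holds a.e.\ on $[T_0,\infty)$.

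Integrability on a finite interval follows term by term. The terms $\norm{R_i}^2$ and $\norm{U_i}^2$ are bounded because $U_i,R_i\in C([0,T];L^2)$, and $\norm{\nabla U_i}^2$ is integrable because $U_i\in L^2(0,T;H^1)$. The left-hand side is integrable since H\"older gives an $L^1$ bound from $R_i\in L^\infty L^2$, $u_i\in L^4L^4$ and $U_i\in L^4L^4$, together with $U_i\in L^\infty L^2$.

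The only delicate point is the bookkeeping of constants so that they match \eqref{eq:memlemma} exactly; in particular, the choice $\varepsilon=r/4$ in \eqref{eq:young2} must produce precisely the factor $1/r$ in $A$. No genuine analytic obstacle is expected.
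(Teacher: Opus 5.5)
Your proposal is correct and matches the paper's proof step for step: the same split using $|\tanh|\le1$ and the Lipschitz bound, H\"older with exponents $(2,4,4)$ plus Young with $\varepsilon=r/4$, then \eqref{eq:gn} and \eqref{eq:young} with $A=C_*k^2M_4^2/r$. The only difference is cosmetic and lies in the integrability check: you use $u_i,U_i\in L^4(0,T;L^4)$ from the energy class, whereas the paper uses $u_i\in L^\infty(T_0,T;L^4)$ and $U_i\in L^2(T_0,T;L^4)$, and both give $R_iu_iU_i\in L^1$ on finite intervals.
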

\begin{proof}
For energy solutions satisfying \eqref{eq:M}, the product \(R_i u_i U_i\)
is integrable on every finite interval \([T_0,T]\).  Indeed,
\[
R_i\in L^\infty(T_0,T;L^2),\qquad
u_i\in L^\infty(T_0,T;L^4),\qquad
U_i\in L^2(T_0,T;H^1)\hookrightarrow L^2(T_0,T;L^4).
\]
Hence \(R_i u_i U_i\in L^1((T_0,T)\times\Omega)\), and the following estimates
are justified first for smooth approximations and then by approximation.
Since $|\tanh z|\le1$ and
$|\tanh z_1-\tanh z_2|\le|z_1-z_2|$,
\begin{equation}
-k(\tanh(\tr_i)\,\tu_i-\tanh(\rho_i)\,u_i,U_i)
\le k\norm{U_i}^2+k\int_\Omega|R_i||u_i||U_i|\,dx.\label{eq:memsplit}
\end{equation}
Young's and H\"older's inequalities give
\begin{equation}
k\int_\Omega|R_i||u_i||U_i|\,dx
\le\frac{r}{4}\norm{R_i}^2
+\frac{k^2}{r}\norm{u_i}_{L^4}^2\norm{U_i}_{L^4}^2.
\label{eq:memholder}
\end{equation}
Set $A=C_*k^2M_4^2/r$.  By \eqref{eq:gn} and \eqref{eq:young},
\begin{equation}
\frac{k^2}{r}\norm{u_i}_{L^4}^2\norm{U_i}_{L^4}^2
\le A\norm{U_i}^2
+A\norm{\nabla U_i}^{3/2}\norm{U_i}^{1/2}.
\end{equation}
\begin{equation}
A\norm{\nabla U_i}^{3/2}\norm{U_i}^{1/2}
\le\theta\norm{\nabla U_i}^2
+C_Y\theta^{-3}\frac{C_*^4k^8M_4^8}{r^4}\norm{U_i}^2.
\label{eq:meminterp}
\end{equation}
Combining \eqref{eq:memsplit}--\eqref{eq:meminterp} proves
\eqref{eq:memlemma}.
\end{proof}

\section{Exponential convergence of the assimilation algorithm}
Fix $M_4$ and the time $T_0$ associated with the reference trajectory $g$ as
in Section~4.2.  Except in the non-memristive case $k=0$, where no
$L^4$ radius is needed, the decay estimates below start at this time $T_0$.
For $n=3$, fix one pair $(g,\widetilde g)$ of energy solutions;
uniqueness requires the additional hypotheses in
Propositions~\ref{prop:refunique} and~\ref{prop:exist}.

Define the error functions
\begin{equation}
U_i=\tu_i-u_i,\qquad W_i=\tw_i-w_i,\qquad R_i=\tr_i-\rho_i.\label{eq:errors}
\end{equation}
We use the unweighted and weighted error energies
\begin{align}
\E(t)&=\sum_i(\norm{U_i}^2+\norm{W_i}^2+\norm{R_i}^2),\label{eq:E}\\
\Fw(t)&=\sum_i(\norm{U_i}^2+\omega\norm{W_i}^2+\norm{R_i}^2),
\qquad\omega=\sigma/a.\label{eq:Ew}
\end{align}
The norm-equivalence constants are
\begin{equation}
c_{\min}\E(t)\le\Fw(t)\le c_{\max}\E(t),\qquad\kappa_\omega=c_{\max}/c_{\min}.
\label{eq:equivalence}
\end{equation}
Synchronization means $\E(t)\to0$ and refers to the reference--approximation error.

Subtracting the reference equations from the assimilated equations gives
\begin{align}
\partial_t U_i&=\eta\Delta U_i+f(\tu_i,x)-f(u_i,x)-\sigma W_i
-k(\tanh(\tr_i)\,\tu_i-\tanh(\rho_i)\,u_i)\notag\\
&\quad+P\sum_{j=1}^m(U_j-U_i)-\mu I_hU_i,\label{eq:U}\\
\partial_t W_i&=aU_i-bW_i,\label{eq:W}\\
\partial_t R_i&=qU_i-rR_i,\label{eq:R}
\end{align}
with homogeneous Neumann condition for $U_i$. The forcing constants $J$ and $c$ cancel because the two networks use the same parameters.

For $k>0$, the subsequent error inequalities hold in $\mathcal D'(T_0,\infty)$. Indeed,
\[
\bigl(f(\widetilde u_i,x)-f(u_i,x),U_i\bigr)
\le \beta\|U_i\|^2,
\qquad U_i=\widetilde u_i-u_i.
\]
Lemma~\ref{lem:mem} gives time integrability of the memristive term, and
time mollification justifies the tests. The integrated inequalities extend
to every time by the $L^2$-continuous representatives.

\subsection{Full membrane-potential observations}
For $I_h=I$, let
\begin{equation}
\Lambda=\beta+k+\frac{q^2}{r}+\frac{C_*k^2M_4^2}{r}
+C_Y\frac{C_*^4k^8M_4^8}{\eta^3r^4}.\label{eq:Lambda}
\end{equation}
\begin{theorem}\label{thm:fullobs}\label{thm:full}
Let the reference and assimilated states be energy solutions in the sense of
Definition~\ref{def:weak}, under the standing hypotheses of Section~2.1.  In
dimension three and for general \(L^2\)-energy data, this is understood
trajectory-wise for a fixed pair of energy solutions; uniqueness is subject to
Propositions~\ref{prop:refunique} and~\ref{prop:exist}.  Use $M_4$ and $T_0$
as in \eqref{eq:M}.  If $I_h=I$ and
\begin{equation}
\mu>\Lambda,
\label{eq:mu}
\end{equation}
then, for all $t\ge T_0$,
\begin{equation}
\E(t)\le
\kappa_\omega e^{-\gamma(t-T_0)}\E(T_0),
\qquad
\gamma=\min\{2b,r,2(\mu-\Lambda)\}>0.
\label{eq:fullrate}
\end{equation}
Thus the assimilated state converges to the reference state in $E$,
although feedback acts only on the voltage equations.
\end{theorem}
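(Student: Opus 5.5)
We test the error system \eqref{eq:U}--\eqref{eq:R} with $U_i$, $\omega W_i$ and $R_i$, respectively, and sum over $i$. The test is justified in $\mathcal D'(T_0,\infty)$ by the remarks preceding this subsection. Because $\omega a=\sigma$, the cross terms $-\sigma(W_i,U_i)$ and $\omega a(U_i,W_i)$ cancel, and we obtain
\[
\frac12\frac{d}{dt}\Fw+\eta\sum_i\norm{\nabla U_i}^2+\omega b\sum_i\norm{W_i}^2+r\sum_i\norm{R_i}^2+\mu\sum_i\norm{U_i}^2
\]
bounded by the reaction, memristive, coupling and $q(U_i,R_i)$ contributions. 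With $I_h=I$ the nudging term is exactly $-\mu\norm{U_i}^2$. We bound the right-hand side term by term:
\begin{itemize}
\item the one-sided Lipschitz bound gives $(f(\tu_i,x)-f(u_i,x),U_i)\le\beta\norm{U_i}^2$;
\item the graph term equals $-\frac P2\sum_{i,j}\norm{U_i-U_j}^2\le0$, as in \eqref{eq:graphref};
\item Young's inequality gives $q|(U_i,R_i)|\le\frac r4\norm{R_i}^2+\frac{q^2}{r}\norm{U_i}^2$.
\end{itemize}

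The memristive term is handled by Lemma~\ref{lem:mem} with the choice $\theta=\eta$. For $t\ge T_0$, \eqref{eq:M} gives $\norm{u_i(t)}_{L^4}\le M_4$ for each $i$, since each summand is bounded by the sum. The lemma then yields
\[
\frac r4\norm{R_i}^2+\eta\norm{\nabla U_i}^2+\Bigl(k+\frac{C_*k^2M_4^2}{r}+C_Y\frac{C_*^4k^8M_4^8}{\eta^3r^4}\Bigr)\norm{U_i}^2 .
\]
The gradient term is absorbed exactly by the diffusion. Each of the two $\frac r4\norm{R_i}^2$ contributions is absorbed by $r\norm{R_i}^2$, leaving $\frac r2\norm{R_i}^2$ on the left. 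Collecting the $\norm{U_i}^2$ coefficients gives precisely $\Lambda$ from \eqref{eq:Lambda}, so
\[
\frac12\frac{d}{dt}\Fw+(\mu-\Lambda)\sum_i\norm{U_i}^2+\omega b\sum_i\norm{W_i}^2+\frac r2\sum_i\norm{R_i}^2\le0 .
\]
Multiplying by $2$, the dissipation dominates $\gamma\Fw$ with $\gamma=\min\{2(\mu-\Lambda),2b,r\}$: the $W$-term is $2b\cdot\omega\norm{W_i}^2$ and the $R$-term is $r\norm{R_i}^2$. Hence $\Fw'+\gamma\Fw\le0$ in distributions on $(T_0,\infty)$.

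Integrating gives $\Fw(t)\le e^{-\gamma(t-T_0)}\Fw(T_0)$, first for a.e.\ $t$ and then for every $t\ge T_0$ by $L^2$-continuity. The equivalence \eqref{eq:equivalence} converts this to $\E(t)\le\kappa_\omega e^{-\gamma(t-T_0)}\E(T_0)$, which is \eqref{eq:fullrate}.

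The main obstacle is rigor rather than algebra. For energy solutions in $n=3$, the chain rule for $\Fw$ and the integrability of $R_iu_iU_i$ must be justified. Lemma~\ref{lem:mem} already supplies the $L^1$ bound, and the reaction pairing is an $L^{4/3}$--$L^4$ duality. I would therefore invoke the mixed-exponent chain rule with time mollification, as in the proof of Theorem~\ref{thm:diss}, and work with the integrated form of the inequality.
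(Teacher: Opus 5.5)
Your proposal is correct and follows the paper's proof step for step. The steps match: the same test functions $(U_i,\omega W_i,R_i)$ with the $\omega a=\sigma$ cancellation, the same bounds on the reaction, graph and $q(U_i,R_i)$ terms, Lemma~\ref{lem:mem} with $\theta=\eta$ yielding exactly $\Lambda$, and the same Gronwall step and norm equivalence giving the factor $\kappa_\omega$. You also state explicitly that $\norm{u_i(t)}_{L^4}\le M_4$ for each $i$ follows from the summed bound \eqref{eq:M}, which the paper only uses implicitly.
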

\begin{proof}
Testing \eqref{eq:U}--\eqref{eq:R} by $(U_i,\omega W_i,R_i)$ gives
\begin{equation}
-\sigma(W_i,U_i)+\omega a(U_i,W_i)=(\omega a-\sigma)(U_i,W_i)=0.\label{eq:cancellation}
\end{equation}
Hence
\begin{equation}\begin{aligned}
&\frac12\Fw'+\eta\sum_i\norm{\nabla U_i}^2+\omega b\sum_i\norm{W_i}^2
+r\sum_i\norm{R_i}^2+\mu\sum_i\norm{U_i}^2\\
&\quad=\sum_i(f(\tu_i,x)-f(u_i,x),U_i)+q\sum_i(U_i,R_i)\\
&\qquad-k\sum_i(\tanh(\tr_i)\,\tu_i-\tanh(\rho_i)\,u_i,U_i)
+P\sum_{i,j}(U_j-U_i,U_i).
\end{aligned}\label{eq:identity}\end{equation}
The coupling term has the same sign as in \eqref{eq:graphref}:
\begin{equation}
P\sum_{i,j}(U_j-U_i,U_i)=-\frac P2\sum_{i,j}\norm{U_i-U_j}^2\le0.\label{eq:graph}
\end{equation}
Also,
\begin{equation}\begin{split}
(f(\tu_i,x)-f(u_i,x))U_i
=U_i^2\int_0^1\partial_sf(u_i+\tau U_i,x)\,d\tau
\le\beta U_i^2.
\end{split}\label{eq:reactiondifference}\end{equation}
Furthermore,
\begin{equation}
q(U_i,R_i)\le\frac r4\norm{R_i}^2+\frac{q^2}{r}\norm{U_i}^2.\label{eq:qerror}
\end{equation}
Using Lemma~\ref{lem:mem} with $\theta=\eta$ in \eqref{eq:identity},
\begin{equation}
\frac12\Fw'+\omega b\sum_i\norm{W_i}^2+\frac r2\sum_i\norm{R_i}^2
+(\mu-\Lambda)\sum_i\norm{U_i}^2\le0,\qquad t\ge T_0.\label{eq:fullcollect}
\end{equation}
Since
\begin{equation}
2\omega b\norm{W_i}^2+r\norm{R_i}^2+2(\mu-\Lambda)\norm{U_i}^2
\ge\gamma(\norm{U_i}^2+\omega\norm{W_i}^2+\norm{R_i}^2),
\end{equation}
\[
\Fw'+\gamma\Fw\le0
\quad\Longrightarrow\quad
\Fw(t)\le e^{-\gamma(t-T_0)}\Fw(T_0).
\]
Finally,
\[
\E(t)\le c_{\min}^{-1}\Fw(t)\le c_{\min}^{-1}e^{-\gamma(t-T_0)}\Fw(T_0)
\le\kappa_\omega e^{-\gamma(t-T_0)}\E(T_0),
\]
which proves \eqref{eq:fullrate}.
\end{proof}

\begin{remark}\label{rem:fixed-weight}
The term \(q^2/r\) in \(\Lambda\) comes from carrying \(\|R_i\|^2\) with
unit weight in \(\Fw\).  If one used
\[
\mathcal E_{\omega,\chi}
=\sum_i\bigl(\|U_i\|^2+\omega\|W_i\|^2+\chi\|R_i\|^2\bigr),
\]
then this term would become \(\chi q^2/r\), while the leading memristive
cost would become \(C_*k^2M_4^2/(\chi r)\).  Thus, for \(k=0\), the formal
limit \(\chi\downarrow0\) gives the threshold \(\beta\); Lemma~\ref{lem:kzero}
provides the corresponding finite-constant argument.

Moreover, since \(M_4^4=O(m)\), the sufficient fixed-weight thresholds
\(\Lambda\) and \(\Lambda_c\) grow like \(O(m^2)\). This reflects the use of the total bound
$\sum_{i=1}^m \|u_i\|_{L^4}^4 \le M_4^4$
to control each individual $\|u_i\|_{L^4}$ in Lemma~\ref{lem:mem};
obtaining per-neuron \(L^4\) bounds is an open problem.
\end{remark}
\subsection{Coarse observations}
The $h$-dependent thresholds below apply to continuum observation operators satisfying both \eqref{eq:approx} and the $L^2$ boundedness assumption \eqref{eq:bounded}. 

Set
\begin{equation}
\Lambda_c=\beta+k+\frac{q^2}{r}+\frac{C_*k^2M_4^2}{r}
+8C_Y\frac{C_*^4k^8M_4^8}{\eta^3r^4}.
\label{eq:Lambdac}
\end{equation}
\begin{theorem}\label{thm:coarseorth}
Assume the solution hypotheses of Theorem~\ref{thm:fullobs}, \eqref{eq:approx}, and that $I_h$ is an orthogonal projection on $L^2(\Omega)$. If
\begin{equation}
\mu>\Lambda_c,\qquad
c_I^2 h^2<\frac{\eta}{2\Lambda_c},
\label{eq:orthconditions}
\end{equation}
then, with
\begin{equation}
\delta_h^\perp=\min\left\{\mu,\frac{\eta}{2c_I^2 h^2}\right\}
-\Lambda_c>0,
\qquad\gamma_h^\perp=\min\{2b,r,2\delta_h^\perp\},\label{eq:orthmargin}
\end{equation}
\begin{equation}
\E(t)\le\kappa_\omega e^{-\gamma_h^\perp(t-T_0)}\E(T_0),
\qquad t\ge T_0.
\label{eq:orthunweighted}
\end{equation}
The strict inequalities in \eqref{eq:orthconditions} are used only to ensure
\(\delta_h^\perp>0\); at equality the argument gives no exponential decay rate.
\end{theorem}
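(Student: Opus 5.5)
The plan is to repeat the energy argument of Theorem~\ref{thm:fullobs} and change only how the nudging term is handled. Orthogonality of $I_h$ lets us split $U_i$ exactly into an observed part and an unobserved part. The unobserved part is then controlled by the half of the diffusion that Lemma~\ref{lem:mem} leaves free when it is applied with $\theta=\eta/2$.

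\textbf{Step 1: energy identity.} Test \eqref{eq:U}--\eqref{eq:R} with $(U_i,\omega W_i,R_i)$ on $[T_0,\infty)$. As in \eqref{eq:identity}, the voltage--recovery cross term cancels by \eqref{eq:cancellation}. The only change is that $\mu\sum_i\norm{U_i}^2$ on the left becomes $\mu\sum_i(I_hU_i,U_i)$.

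\textbf{Step 2: the nudging term.} Since $I_h$ is an orthogonal projection, $(I_hU_i,U_i)=\norm{I_hU_i}^2$ and
\[
\norm{U_i}^2=\norm{I_hU_i}^2+\norm{U_i-I_hU_i}^2 .
\]

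\textbf{Step 3: right-hand side.} Bound the coupling term by \eqref{eq:graph}, the reaction term by \eqref{eq:reactiondifference}, and the $q$-term by \eqref{eq:qerror}. Bound the memristive term by Lemma~\ref{lem:mem} with $\theta=\eta/2$. This choice gives $C_Y\theta^{-3}=8C_Y\eta^{-3}$, which is exactly the last term in $\Lambda_c$ from \eqref{eq:Lambdac}. The two contributions $\frac r4\norm{R_i}^2$ are absorbed into $r\norm{R_i}^2$, as in the full-observation case. We obtain
\[
\frac12\Fw'+\frac\eta2\sum_i\norm{\nabla U_i}^2+\omega b\sum_i\norm{W_i}^2+\frac r2\sum_i\norm{R_i}^2+\mu\sum_i\norm{I_hU_i}^2\le\Lambda_c\sum_i\norm{U_i}^2 .
\]

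\textbf{Step 4: controlling the unobserved part.} By \eqref{eq:approx},
\[
\frac\eta2\norm{\nabla U_i}^2\ge\frac{\eta}{2c_I^2h^2}\norm{U_i-I_hU_i}^2 .
\]
Using the Pythagorean identity from Step~2,
\[
\mu\norm{I_hU_i}^2+\frac{\eta}{2c_I^2h^2}\norm{U_i-I_hU_i}^2\ge\min\Bigl\{\mu,\frac{\eta}{2c_I^2h^2}\Bigr\}\norm{U_i}^2 .
\]
Subtracting $\Lambda_c\norm{U_i}^2$ leaves the coefficient $\delta_h^\perp$ from \eqref{eq:orthmargin}. Conditions \eqref{eq:orthconditions} make $\delta_h^\perp>0$, since $c_I^2h^2<\eta/(2\Lambda_c)$ is equivalent to $\eta/(2c_I^2h^2)>\Lambda_c$.

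\textbf{Step 5: conclusion.} This gives
\[
\frac12\Fw'+\omega b\sum_i\norm{W_i}^2+\frac r2\sum_i\norm{R_i}^2+\delta_h^\perp\sum_i\norm{U_i}^2\le0 ,
\]
hence $\Fw'+\gamma_h^\perp\Fw\le0$. Integrating from $T_0$ and using the norm equivalence \eqref{eq:equivalence} yields \eqref{eq:orthunweighted}.

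The main obstacle is Step~4, specifically splitting the diffusion correctly. The memristive interpolation term consumes part of $\eta\norm{\nabla U_i}^2$, and what remains must be enough to dominate the unobserved component through \eqref{eq:approx}. Taking $\theta=\eta/2$ is the choice that reproduces both the factor $8$ in $\Lambda_c$ and the threshold $\eta/(2c_I^2h^2)$.

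On rigor, the estimates hold in $\mathcal D'(T_0,\infty)$ exactly as justified before Theorem~\ref{thm:fullobs}. The new nudging term $\mu(I_hU_i,U_i)$ is harmless for this justification, since $I_h$ is bounded on $L^2$.
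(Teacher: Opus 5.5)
Your proposal is correct and follows the paper's proof essentially step for step. It uses the same ingredients: the orthogonal splitting $\norm{U_i}^2=\norm{I_hU_i}^2+\norm{(I-I_h)U_i}^2$, Lemma~\ref{lem:mem} with $\theta=\eta/2$ (which produces the factor $8C_Y$ in $\Lambda_c$), the coercivity bound $\frac\eta2\norm{\nabla U_i}^2+\mu\norm{I_hU_i}^2\ge\min\{\mu,\eta/(2c_I^2h^2)\}\norm{U_i}^2$ from \eqref{eq:approx}, and finally $\Fw'+\gamma_h^\perp\Fw\le0$ together with the norm equivalence \eqref{eq:equivalence}.
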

\begin{proof}
By orthogonality, we have
\[
(I_hU_i,U_i)=\norm{I_hU_i}^2,\qquad
\norm{U_i}^2=\norm{I_hU_i}^2+\norm{(I-I_h)U_i}^2.
\]
Write $s_h=\eta/(2c_I^2 h^2)$. The approximation estimate implies
\begin{align}
\frac\eta2\norm{\nabla U_i}^2+\mu\norm{I_hU_i}^2
&\ge s_h\norm{(I-I_h)U_i}^2+\mu\norm{I_hU_i}^2\notag\\
&\ge\min\{\mu,s_h\}\norm{U_i}^2.\label{eq:orthcoercivity}
\end{align}
Using Lemma~\ref{lem:mem} with $\theta=\eta/2$ and \eqref{eq:qerror}, we get
\[
\frac12\Fw'+\frac\eta2\sum_i\norm{\nabla U_i}^2
+\mu\sum_i\norm{I_hU_i}^2+\omega b\sum_i\norm{W_i}^2
+\frac r2\sum_i\norm{R_i}^2
\le\Lambda_c\sum_i\norm{U_i}^2.
\]
By \eqref{eq:orthcoercivity},
\begin{equation}
\frac12\Fw'
+\delta_h^\perp\sum_i\norm{U_i}^2
+\omega b\sum_i\norm{W_i}^2
+\frac r2\sum_i\norm{R_i}^2\le0.
\label{eq:orthcollect}
\end{equation}
Thus $\Fw'+\gamma_h^\perp\Fw\le0$. Integration and \eqref{eq:equivalence} give \eqref{eq:orthunweighted}.
\end{proof}

When observations are coarse and $I_h$ is only assumed linear, the diffusion must control both the interpolation error and the nonlinear memristive term. Define
\begin{equation}
\delta_h=\mu-\Lambda_c-
\frac{\mu^2c_I^2 h^2}{2\eta}.
\label{eq:delta}
\end{equation}
\begin{theorem}\label{thm:coarse}
Assume the solution hypotheses of Theorem~\ref{thm:fullobs} and \eqref{eq:approx}--\eqref{eq:bounded}. Use $M_4$ and $T_0$ as in \eqref{eq:M}. If
\begin{equation}\delta_h>0,\label{eq:deltapos}\end{equation}
then
\begin{equation}
\E(t)\le\kappa_\omega e^{-\gamma_h(t-T_0)}\E(T_0),
\qquad
\gamma_h=\min\{2b,r,2\delta_h\}>0.
\label{eq:coarseunweighted}
\end{equation}
A sufficient pair of conditions is
\begin{equation}
\mu>2\Lambda_c,\qquad
\mu c_I^2 h^2\le\eta.
\label{eq:sufficient}
\end{equation}
\end{theorem}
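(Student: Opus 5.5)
We follow the proof of Theorem~\ref{thm:fullobs}. The only new ingredient is how to handle the nudging term $-\mu(I_hU_i,U_i)$ when $I_h$ is merely linear. Testing \eqref{eq:U}--\eqref{eq:R} by $(U_i,\omega W_i,R_i)$, we cancel the voltage--recovery cross term via \eqref{eq:cancellation} and drop the coupling term by \eqref{eq:graph}. We bound the reaction difference by $\beta\norm{U_i}^2$ using \eqref{eq:reactiondifference}, and the term $q(U_i,R_i)$ by \eqref{eq:qerror}. For the memristive term we apply Lemma~\ref{lem:mem} with $\theta=\eta/2$, which is valid a.e.\ on $[T_0,\infty)$ by \eqref{eq:M}. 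This choice produces exactly the constant $\Lambda_c$ of \eqref{eq:Lambdac}: indeed $C_Y(\eta/2)^{-3}=8C_Y\eta^{-3}$. It also leaves $\frac\eta2\sum_i\norm{\nabla U_i}^2$ available on the left-hand side.

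For the nudging term, write $I_hU_i=U_i-(U_i-I_hU_i)$. Then
\[
-\mu(I_hU_i,U_i)=-\mu\norm{U_i}^2+\mu(U_i-I_hU_i,U_i).
\]
By \eqref{eq:approx} and Young's inequality \eqref{eq:young2},
\[
\mu\norm{U_i-I_hU_i}\norm{U_i}\le \mu c_Ih\norm{\nabla U_i}\norm{U_i}
\le\frac\eta2\norm{\nabla U_i}^2+\frac{\mu^2c_I^2h^2}{2\eta}\norm{U_i}^2 .
\]
This absorbs the remaining half of the diffusion. Collecting terms gives
\[
\tfrac12\Fw'+\omega b\sum_i\norm{W_i}^2+\tfrac r2\sum_i\norm{R_i}^2+\delta_h\sum_i\norm{U_i}^2\le0,\qquad t\ge T_0,
\]
in $\mathcal D'(T_0,\infty)$, with $\delta_h$ as in \eqref{eq:delta}. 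Then
\[
2\omega b\norm{W_i}^2+r\norm{R_i}^2+2\delta_h\norm{U_i}^2\ge\gamma_h\bigl(\norm{U_i}^2+\omega\norm{W_i}^2+\norm{R_i}^2\bigr),
\]
so $\Fw'+\gamma_h\Fw\le0$. Integrating from $T_0$, and extending to every $t$ through the $L^2$-continuous representatives, gives $\Fw(t)\le e^{-\gamma_h(t-T_0)}\Fw(T_0)$. The norm equivalence \eqref{eq:equivalence} then yields \eqref{eq:coarseunweighted}. The assumption \eqref{eq:bounded} is used only to make the nudging term meaningful for energy solutions and to justify the tests; it does not enter the constants.

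For the sufficient conditions \eqref{eq:sufficient}: if $\mu c_I^2h^2\le\eta$, then $\mu^2c_I^2h^2/(2\eta)\le\mu/2$. Hence $\delta_h\ge\mu/2-\Lambda_c$, which is positive when $\mu>2\Lambda_c$.

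The only delicate step is the split of the diffusion. Half of it must absorb the $\norm{\nabla U_i}^{3/2}$ interpolation term of Lemma~\ref{lem:mem}, and the other half must control the interpolation error of $I_h$. Choosing $\theta=\eta/2$ in both places is what makes the constants match $\Lambda_c$ and $\delta_h$ exactly. The justification of testing for energy solutions is the same as in Theorem~\ref{thm:fullobs}.
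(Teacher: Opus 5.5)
Your proposal is correct and follows the paper's proof essentially step for step. The paper also splits $-\mu(I_hU_i,U_i)=-\mu\norm{U_i}^2+\mu(U_i-I_hU_i,U_i)$ and absorbs half the diffusion via \eqref{eq:approx} and Young, applies Lemma~\ref{lem:mem} with $\theta=\eta/2$ to the other half to produce $\Lambda_c$, and concludes with $\Fw'+\gamma_h\Fw\le0$, norm equivalence, and the same check $\mu^2c_I^2h^2/(2\eta)\le\mu/2$ for the sufficient conditions.
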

\begin{proof}
The feedback term satisfies
\begin{align}
-\mu(I_hU_i,U_i)&=-\mu\norm{U_i}^2+\mu(U_i-I_hU_i,U_i)\notag\\
&\le-\mu\norm{U_i}^2+\mu c_I\,h\,\norm{\nabla U_i}\norm{U_i}\notag\\
&\le-\mu\norm{U_i}^2+\frac\eta2\norm{\nabla U_i}^2
+\frac{\mu^2c_I^2 h^2}{2\eta}\norm{U_i}^2.\label{eq:coarsefeedback}
\end{align}
Using Lemma~\ref{lem:mem} with $\theta=\eta/2$ gives
\begin{equation}
\frac12\Fw'
+\omega b\sum_i\norm{W_i}^2
+\frac r2\sum_i\norm{R_i}^2
+\delta_h\sum_i\norm{U_i}^2\le0.
\label{eq:coarsecollect}
\end{equation}
Thus $\Fw'+\gamma_h\Fw\le0$, and integration plus \eqref{eq:equivalence} gives \eqref{eq:coarseunweighted}. Moreover,
\begin{equation}
\frac{\mu^2c_I^2 h^2}{2\eta}\le\frac\mu2
\quad\Longrightarrow\quad
\delta_h\ge\frac\mu2-\Lambda_c>0.
\end{equation}
\end{proof}

\subsection{Observational noise and the non-memristive case}
Related continuous-data-assimilation estimates with multiplicative
observation noise are given in \cite{broecker}.
Let the observed data be
\begin{equation}
y_i(t)=u_i(t)+\xi_i(t),\qquad\xi_i\in L^2(0,T;L^2(\Omega))\quad\text{for every }T>0,\label{eq:noise}
\end{equation}
and use $-\mu I_h(\tu_i-y_i)$ in the voltage equation. For coarse
observations, we have
\begin{equation}
I_hy_i=I_hu_i+I_h\xi_i.
\label{eq:observednoise}
\end{equation}
The noise contribution enters the error equation as \(\mu I_h\xi_i\).  When
the perturbation is specified directly in the observation space, \(I_h\xi_i\)
is understood as the measured noise.  By \eqref{eq:bounded}, this forcing is
locally square-integrable in time, so the initial-value construction is
unchanged.  Define
\begin{equation}
\mathcal N(t):=\sum_{i=1}^m\norm{\xi_i(t)}^2.
\label{eq:noisemagnitude}
\end{equation}

\begin{theorem}\label{thm:noise}
For general coarse interpolants, assume the hypotheses of Theorem~\ref{thm:coarse}, with the same $\varepsilon$, $M_4$, and reference time $T_0$, and set $\Gamma_h=\min\{2b,r,\delta_h\}$. Then
\begin{equation}
\Fw(t)
\le e^{-\Gamma_h(t-T_0)}\Fw(T_0)
+\frac{\mu^2c_0^2}{\delta_h}\int_{T_0}^t e^{-\Gamma_h(t-s)}\mathcal N(s)\,ds.
\label{eq:noisybound}
\end{equation}
If $\mathcal N(t)\le\mathcal N_*$ for almost every sufficiently large $t$, then
\begin{align}
\limsup_{t\to\infty}\Fw(t)
&\le\frac{\mu^2c_0^2}{\Gamma_h\delta_h}\mathcal N_*,\label{eq:floor}\\
\limsup_{t\to\infty}\E(t)
&\le\frac{\mu^2c_0^2}
{c_{\min}\Gamma_h\delta_h}\mathcal N_*.
\label{eq:floorunweighted}
\end{align}
The same argument gives the corresponding estimates in the full-observation
and orthogonal-coarse cases, with the same weighted energy \(\mathcal E_\omega\).
For full observations, replace \(\delta_h\) by
\[
\delta_{\rm full}:=\mu-\Lambda,
\qquad
\Gamma_{\rm full}:=\min\{2b,r,\mu-\Lambda\}.
\]
For orthogonal coarse observations, replace \(\delta_h\) by
\[
\delta_{\rm orth}:=\delta_h^\perp,
\qquad
\Gamma_{\rm orth}:=\min\{2b,r,\delta_h^\perp\}.
\]
Take $c_0=1$ in the full-observation and orthogonal-projection cases. The case $k=0$ is stated in Lemma~\ref{lem:kzero-noise}.
\end{theorem}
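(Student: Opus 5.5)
The plan is to repeat the weighted energy argument of Theorem~\ref{thm:coarse}, now with the noise as a forcing term in the error equation. With noisy data the voltage error equation \eqref{eq:U} gains the term $+\mu I_h\xi_i$, while \eqref{eq:W}--\eqref{eq:R} are unchanged. We test by $(U_i,\omega W_i,R_i)$ and use the cancellation \eqref{eq:cancellation}. All terms except the new one are estimated exactly as before: the graph term by \eqref{eq:graph}, the reaction term by \eqref{eq:reactiondifference}, the $q$-coupling by \eqref{eq:qerror}, the memristive term by Lemma~\ref{lem:mem} with $\theta=\eta/2$ on $[T_0,\infty)$, and the feedback term by \eqref{eq:coarsefeedback}. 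This reproduces \eqref{eq:coarsecollect} with the extra term $\mu\sum_i(I_h\xi_i,U_i)$ on the right-hand side.

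The key step is to absorb this noise term using only half of the voltage margin $\delta_h$. By \eqref{eq:bounded} and \eqref{eq:young2} with $\varepsilon=\delta_h/2$,
\[
\mu|(I_h\xi_i,U_i)|\le \mu c_0\norm{\xi_i}\norm{U_i}\le \frac{\delta_h}{2}\norm{U_i}^2+\frac{\mu^2c_0^2}{2\delta_h}\norm{\xi_i}^2 .
\]
Multiplying the resulting inequality by $2$ gives
\[
\Fw'+2\omega b\sum_i\norm{W_i}^2+r\sum_i\norm{R_i}^2+\delta_h\sum_i\norm{U_i}^2\le \frac{\mu^2c_0^2}{\delta_h}\mathcal N(t).
\]
The left-hand side dominates $\Fw'+\Gamma_h\Fw$ with $\Gamma_h=\min\{2b,r,\delta_h\}$. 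This is why $\delta_h$, rather than $2\delta_h$, appears in $\Gamma_h$. Multiplying by $e^{\Gamma_h t}$ and integrating over $[T_0,t]$ yields \eqref{eq:noisybound}.

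The main technical obstacle is justification at the energy-solution level. Since $\xi_i\in L^2_{\rm loc}(L^2)$, the product $(I_h\xi_i,U_i)$ is locally integrable in time, because $U_i\in C([0,T];L^2)$. The inequality therefore holds in $\mathcal D'(T_0,\infty)$ by the same time-mollification argument indicated before \eqref{eq:U}. The differential inequality is then integrated against $e^{\Gamma_h s}$, and it extends to every $t$ by $L^2$-continuity.

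For the asymptotic bounds, suppose $\mathcal N\le\mathcal N_*$ a.e.\ for $s\ge T_1$. We split the integral at $T_1$. The part over $[T_0,T_1]$ is multiplied by $e^{-\Gamma_h(t-T_1)}$ and tends to $0$. The remaining part is bounded by $\mathcal N_*(1-e^{-\Gamma_h(t-T_1)})/\Gamma_h$. This gives \eqref{eq:floor}, and \eqref{eq:floorunweighted} follows from \eqref{eq:equivalence}.

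The full-observation and orthogonal cases use the same absorption step. There the starting point is \eqref{eq:fullcollect} with margin $\mu-\Lambda$, respectively \eqref{eq:orthcollect} with margin $\delta_h^\perp$, and $c_0=1$ because $\norm{I_h\xi}\le\norm{\xi}$ for the identity or an orthogonal projection.
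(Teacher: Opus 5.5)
Your proposal is correct and follows the paper's proof essentially step for step. You add the forcing $\mu I_h\xi_i$ to \eqref{eq:coarsecollect} and absorb it with half of the margin $\delta_h$ via \eqref{eq:bounded} and Young's inequality. Multiplying by two gives $\Fw'+\Gamma_h\Fw\le \mu^2c_0^2\delta_h^{-1}\mathcal N$, and you conclude with the integrating factor and a split of the convolution at the time after which $\mathcal N\le\mathcal N_*$. Your treatment of the full and orthogonal cases, starting from \eqref{eq:fullcollect} and \eqref{eq:orthcollect} with $c_0=1$, is exactly what the paper does.
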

\begin{proof}
The noise term satisfies
\begin{align}
\mu|(I_h\xi_i,U_i)|&\le\mu c_0\norm{\xi_i}\norm{U_i}\notag\\
&\le\frac{\delta_h}{2}\norm{U_i}^2+
\frac{\mu^2c_0^2}{2\delta_h}\norm{\xi_i}^2.\label{eq:noiseyoung}
\end{align}
Add this estimate to \eqref{eq:coarsecollect} and multiply by two. We obtain
\begin{equation}
\Fw'
+2\omega b\sum_i\norm{W_i}^2
+r\sum_i\norm{R_i}^2
+\delta_h\sum_i\norm{U_i}^2
\le\frac{\mu^2c_0^2}{\delta_h}\mathcal N(t).
\label{eq:noiseenergy}
\end{equation}
Therefore,
\begin{equation}
\Fw'+\Gamma_h\Fw
\le\frac{\mu^2c_0^2}{\delta_h}\mathcal N(t).
\end{equation}
The integrating factor $e^{\Gamma_ht}$ gives \eqref{eq:noisybound}, and \eqref{eq:equivalence} gives
\begin{equation}
\E(t)\le\kappa_\omega e^{-\Gamma_h(t-T_0)}\E(T_0)
+\frac{\mu^2c_0^2}{c_{\min}\delta_h}
\int_{T_0}^t e^{-\Gamma_h(t-s)}\mathcal N(s)\,ds.
\label{eq:noiseunweighted}
\end{equation}
If $\mathcal N(s)\le\mathcal N_*$ a.e. on $[S,\infty)$, then
\begin{equation}
\int_{T_0}^S e^{-\Gamma_h(t-s)}\mathcal N(s)\,ds
=e^{-\Gamma_h(t-S)}\int_{T_0}^S e^{-\Gamma_h(S-s)}\mathcal N(s)\,ds\longrightarrow0,
\end{equation}
and
\begin{equation}
\mathcal N_*\int_S^t e^{-\Gamma_h(t-s)}\,ds
=\frac{\mathcal N_*}{\Gamma_h}(1-e^{-\Gamma_h(t-S)}).
\end{equation}
This proves \eqref{eq:floor}--\eqref{eq:floorunweighted}; the other two cases start from \eqref{eq:fullcollect} and \eqref{eq:orthcollect}.
\end{proof}

\begin{lemma}\label{lem:kzero}
Assume $k=0$. Define
\[
\E_{\omega}^{UW}(t):=\sum_i\bigl(\norm{U_i}^2+
\omega\norm{W_i}^2\bigr).
\]
For the following three observation cases set
\[
\delta_0=
\begin{cases}
\mu-\beta, & I_h=I,\\[1mm]
\displaystyle \min\left\{\mu,\frac{\eta}{c_I^2h^2}\right\}-\beta,
& I_h \text{ is an orthogonal projection},\\[3mm]
\displaystyle \mu-\beta-\frac{\mu^2c_I^2h^2}{4\eta},
& I_h \text{ is a general interpolant}.
\end{cases}
\]
If $\delta_0>0$, then
\[
\E_\omega^{UW}(t)
\le e^{-\min\{2b,\,2\delta_0\}t}\E_\omega^{UW}(0),
\qquad t\ge0.
\]
Moreover, for every
\[
0<\gamma<\min\{2b,2r,2\delta_0\},
\]
there exists $C_\gamma<\infty$ such that
\[
\E(t)\le C_\gamma e^{-\gamma t}\E(0),\qquad t\ge0.
\]
\end{lemma}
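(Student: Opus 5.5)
With $k=0$ the memristive term disappears from the voltage error equation \eqref{eq:U}. The $(U,W)$ subsystem therefore decouples from $R$, and $R$ is driven only through \eqref{eq:R}. This is why no $L^4$ radius or starting time $T_0$ is needed, and the estimate can begin at $t=0$.

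\textbf{Step 1: the $(U,W)$ energy.} Test \eqref{eq:U} with $U_i$ and \eqref{eq:W} with $\omega W_i$. The cross term cancels exactly as in \eqref{eq:cancellation}, the graph term is nonpositive by \eqref{eq:graph}, and the reaction term is bounded by $\beta\norm{U_i}^2$ via \eqref{eq:reactiondifference}. This gives
\[
\tfrac12\tfrac{d}{dt}\E_\omega^{UW}+\eta\sum_i\norm{\nabla U_i}^2+\omega b\sum_i\norm{W_i}^2+\mu\sum_i(I_hU_i,U_i)\le\beta\sum_i\norm{U_i}^2 .
\]
The three cases then follow the earlier arguments, but with the full $\eta$ available since nothing else needs the gradient.
\begin{itemize}
\item For $I_h=I$ the feedback term is simply $\mu\norm{U_i}^2$.
\item For an orthogonal projection, repeat \eqref{eq:orthcoercivity} using all of $\eta\norm{\nabla U_i}^2\ge (\eta/(c_I^2h^2))\norm{(I-I_h)U_i}^2$. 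This produces the threshold $\min\{\mu,\eta/(c_I^2h^2)\}$.
\item For a general interpolant, use \eqref{eq:coarsefeedback} with Young's inequality absorbing into $\eta\norm{\nabla U_i}^2$ entirely: $\mu c_I h\norm{\nabla U}\norm{U}\le\eta\norm{\nabla U}^2+\frac{\mu^2c_I^2h^2}{4\eta}\norm{U}^2$.
\end{itemize}
In each case the result is $\frac12(\E^{UW}_\omega)'+\omega b\sum\norm{W_i}^2+\delta_0\sum\norm{U_i}^2\le0$. Hence $(\E^{UW}_\omega)'+\min\{2b,2\delta_0\}\E^{UW}_\omega\le0$, and Gronwall from $t=0$ gives the first claim. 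Justification in $\mathcal D'(0,\infty)$ is as before, by time mollification.

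\textbf{Step 2: the unobserved memductance.} Set $\alpha=\min\{2b,2\delta_0\}$. Testing \eqref{eq:R} with $R_i$ and using Young's inequality with a parameter $\epsilon\in(0,r)$ gives
\[
\tfrac{d}{dt}\sum_i\norm{R_i}^2+(2r-\epsilon)\sum_i\norm{R_i}^2\le \tfrac{q^2}{\epsilon}\sum_i\norm{U_i}^2\le\tfrac{q^2}{\epsilon}e^{-\alpha t}\E^{UW}_\omega(0).
\]
Fix $\gamma<\min\{2r,\alpha\}$ and choose $\epsilon=2r-\gamma$ if $\gamma\ge r$, or any $\epsilon\in(0,r]$ with $2r-\epsilon>\gamma$ otherwise. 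Variation of constants then gives
\[
\sum_i\norm{R_i(t)}^2\le e^{-(2r-\epsilon)t}\sum_i\norm{R_i(0)}^2+\frac{q^2}{\epsilon}\E^{UW}_\omega(0)\int_0^te^{-(2r-\epsilon)(t-s)}e^{-\alpha s}\,ds .
\]
The convolution integral is bounded by $C e^{-\gamma t}$. Strictly, it is $\le t e^{-\min\{\cdot\}t}$ in the resonant case, which is precisely why $\gamma$ must be strictly below the minimum: when the two exponents are equal, one absorbs the polynomial factor $t$ into a slightly smaller rate.

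Combining with Step 1 and the norm equivalence $\min\{1,\omega\}$, $\max\{1,\omega\}$ gives $\E(t)\le C_\gamma e^{-\gamma t}\E(0)$. Here $C_\gamma$ depends on $q,r,\omega,\gamma,\alpha$.

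\textbf{Main obstacle.} The main difficulty is bookkeeping the convolution of the two exponentials. One must choose $\epsilon$ so that the $R$-decay rate $2r-\epsilon$ still exceeds $\gamma$, and handle the case where the rates coincide. I expect the rest to be a direct specialization of Theorems~\ref{thm:fullobs}, \ref{thm:coarseorth} and \ref{thm:coarse}.
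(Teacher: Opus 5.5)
Your proposal is correct and follows the paper's proof: the same $\omega$-weighted $(U,W)$ energy, with the three feedback cases handled the same way using the full diffusion $\eta\|\nabla U_i\|^2$, followed by recovering $R_i$ from its damped equation forced by $U_i$. The only minor difference is that you derive a scalar inequality for $\sum_i\|R_i\|^2$ by Young's inequality and then use variation of constants, whereas the paper applies the Duhamel formula to $R_i$ directly; both give the restriction $\gamma<\min\{2b,2r,2\delta_0\}$, and your version makes $C_\gamma$ explicit.
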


\begin{proof}
When $k=0$, the memductance error $R_i$ no longer appears in the voltage-error
equation. Testing the $U_i$- and $W_i$-equations by $U_i$ and $\omega W_i$,
using $\omega a=\sigma$, and applying the reaction bound, we have
\begin{equation}
\frac12(\E_{\omega}^{UW})'
+\eta\sum_i\norm{\nabla U_i}^2
+\omega b\sum_i\norm{W_i}^2
-\sum_i \mu(I_hU_i,U_i)
\le \beta\sum_i\norm{U_i}^2 .
\label{eq:kzeroUW}
\end{equation}
For $I_h=I$, the feedback term contributes $\mu\sum_i\norm{U_i}^2$, and hence
\begin{equation}
\frac12(\E_{\omega}^{UW})'
+\omega b\sum_i\norm{W_i}^2
+(\mu-\beta)\sum_i\norm{U_i}^2\le0.
\label{eq:kzero-full}
\end{equation}
For an orthogonal projection $I_h$, the approximation property gives
\[
\eta\norm{\nabla U_i}^2+\mu\norm{I_hU_i}^2
\ge \min\left\{\mu,\frac{\eta}{c_I^2h^2}\right\}\norm{U_i}^2,
\]
so the same inequality holds with $\mu-\beta$ replaced by
$\min\{\mu,\eta/(c_I^2h^2)\}-\beta$. For a general interpolant,
\[
-\mu(I_hU_i,U_i)
=-\mu\norm{U_i}^2+\mu(U_i-I_hU_i,U_i)
\]
and
\[
\mu c_Ih\norm{\nabla U_i}\norm{U_i}
\le \eta\norm{\nabla U_i}^2
+\frac{\mu^2c_I^2h^2}{4\eta}\norm{U_i}^2.
\]
For a general coarse interpolant, the interpolation is controlled by
using the diffusion term:
\[
\mu(U_i-I_hU_i,U_i)
\le
\eta\|\nabla U_i\|^2
+
\frac{\mu^2c_I^2h^2}{4\eta}\|U_i\|^2.
\]
Thus the remaining voltage margin is
\[
\delta_0=\mu-\beta-\frac{\mu^2c_I^2h^2}{4\eta}.
\]
In each of the three observation cases, the same energy argument then gives
\[
(\E_{\omega}^{UW})'
+2\omega b\sum_i\norm{W_i}^2
+2\delta_0\sum_i\norm{U_i}^2\le0.
\]
Coefficient comparison gives
\[
(\E_{\omega}^{UW})'+\min\{2b,2\delta_0\}\E_{\omega}^{UW}\le0,
\]
and Gronwall's inequality proves the decay of $\E_\omega^{UW}$.

It remains to control the memductance error. Since
\[
\partial_t R_i=qU_i-rR_i,
\]
we have
\[
R_i(t)=e^{-rt}R_i(0)+q\int_0^t e^{-r(t-s)}U_i(s)\,ds.
\]
The first term decays because of the damping $r$. The second term is forced
by $U_i$, which already decays exponentially by the estimate for
$\E_\omega^{UW}$. Hence $R_i$ also decays exponentially, with any squared-norm
rate strictly smaller than the minimum of the voltage--recovery rate and the
memductance damping rate. Therefore, for every
\[
0<\gamma<\min\{2b,2r,2\delta_0\},
\]
there exists a finite constant $C_\gamma$ such that
\[
\E(t)\le C_\gamma e^{-\gamma t}\E(0),\qquad t\ge0.
\]
The stated sufficient conditions are exactly the positivity conditions for the
corresponding margins.
\end{proof}

\begin{lemma}\label{lem:kzero-noise}
Assume $k=0$ and let
\[
\delta=
\begin{cases}
\mu-\beta,&I_h=I,\\[1mm]
\min\{\mu,\eta/(c_I^2 h^2)\}-\beta,&I_h\text{ orthogonal},\\[1mm]
\mu-\beta-\mu^2c_I^2 h^2/(4\eta),&I_h\text{ general},
\end{cases}
\qquad \delta>0.
\]
Take $c_0=1$ when $I_h=I$ or when $I_h$ is an orthogonal projection. Set
\[
F=\E_\omega^{UW},\qquad G=\sum_i\norm{R_i}^2,
\]
\[
\gamma_0=\min\{2b,\delta\},\qquad
\zeta=\frac{r\gamma_0}{2q^2},\qquad
H=F+\zeta G,
\qquad
\gamma_* = \min\{\gamma_0/2,r\}.
\]
Then
\[
H(t)\le e^{-\gamma_*t}H(0)
+\frac{\mu^2c_0^2}{\delta}
\int_0^t e^{-\gamma_*(t-s)}\mathcal N(s)\,ds.
\]
If $\mathcal N(t)\le\mathcal N_*$ for almost every sufficiently large $t$, then
\[
\limsup_{t\to\infty}\E(t)
\le\frac{\mu^2c_0^2}
{\min\{1,\omega,\zeta\}\,\delta\gamma_*}\mathcal N_*.
\]
\end{lemma}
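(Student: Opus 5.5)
The plan is to run the weighted $(U,W)$ energy estimate of Lemma~\ref{lem:kzero} with the extra noise forcing. Then I would close the memductance error through its own damped equation and combine the two with the weight $\zeta$.

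\textbf{Voltage--recovery part.} With noisy data the voltage error equation is \eqref{eq:U} with $k=0$ plus the forcing $+\mu I_h\xi_i$ on the right-hand side. I will test with $U_i$ and $\omega W_i$, use $\omega a=\sigma$, the reaction bound \eqref{eq:reactiondifference}, and the graph identity \eqref{eq:graph}. The feedback is handled exactly as in the three cases of Lemma~\ref{lem:kzero}. This gives
\[
\tfrac12F'+\omega b\sum_i\norm{W_i}^2+\delta\sum_i\norm{U_i}^2\le\mu\sum_i|(I_h\xi_i,U_i)|.
\]
As in \eqref{eq:noiseyoung}, the right side is at most $\frac\delta2\sum_i\norm{U_i}^2+\frac{\mu^2c_0^2}{2\delta}\mathcal N$. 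Hence
\[
F'+2\omega b\sum_i\norm{W_i}^2+\delta\sum_i\norm{U_i}^2\le\frac{\mu^2c_0^2}{\delta}\mathcal N(t).
\]

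\textbf{Memductance part.} I will test \eqref{eq:R} with $R_i$ and use Young's inequality $q(U_i,R_i)\le\frac r2\norm{R_i}^2+\frac{q^2}{2r}\norm{U_i}^2$. This yields
\[
G'+rG\le\frac{q^2}{r}\sum_i\norm{U_i}^2 .
\]
Multiplying by $\zeta=r\gamma_0/(2q^2)$ makes the right side exactly $\frac{\gamma_0}{2}\sum_i\norm{U_i}^2$. Adding the two inequalities gives
\[
H'+2\omega b\sum_i\norm{W_i}^2+\Bigl(\delta-\frac{\gamma_0}{2}\Bigr)\sum_i\norm{U_i}^2+r\zeta G\le\frac{\mu^2c_0^2}{\delta}\mathcal N.
\]

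\textbf{Coefficient comparison.} Since $\gamma_0\le\delta$, we have $\delta-\gamma_0/2\ge\gamma_0/2$. Since $\gamma_0\le 2b$, we have $2\omega b\ge\omega\gamma_0/2$. The $G$ coefficient is $r\zeta\ge\gamma_*\zeta$. Therefore
\[
H'+\gamma_*H\le\frac{\mu^2c_0^2}{\delta}\mathcal N,
\]
and the integrating factor $e^{\gamma_*t}$ yields the stated bound for $H$.

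\textbf{Asymptotic floor.} The limsup statement follows exactly as in the proof of Theorem~\ref{thm:noise}: split the integral at a time $S$ after which $\mathcal N\le\mathcal N_*$. The part on $[0,S]$ decays like $e^{-\gamma_*(t-S)}$, and the tail is at most $\mathcal N_*/\gamma_*$. Finally, $\min\{1,\omega,\zeta\}\,\E\le H$ converts this into the bound for $\E$.

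\textbf{Main obstacle.} The only delicate point is the choice of $\zeta$. It must be small enough that the $U$-cost $\zeta q^2/r$ of the memductance coupling is absorbed by the leftover voltage margin $\delta/2$ after the noise Young step. That is why $\gamma_0$ uses $\delta$ rather than $2\delta$, and why the rate $\gamma_*$ is halved. The rest is routine. The tests are justified by time mollification as in Section~5, since $k=0$ removes the memristive product and $I_h\xi_i\in L^2_{\rm loc}(0,\infty;L^2)$ by \eqref{eq:bounded}.
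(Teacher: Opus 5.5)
Your proposal is correct and follows the paper's proof essentially step for step. It uses the same noisy $(U,W)$ energy inequality with the Young split at $\delta/2$, the same damped memductance inequality $G'+rG\le (q^2/r)\sum_i\|U_i\|^2$ weighted by $\zeta$, and the same integrating-factor and time-splitting argument for the error floor. The only difference is cosmetic: you absorb the $\zeta q^2/r$ cost directly into the leftover $\delta\sum_i\|U_i\|^2$ term, whereas the paper first passes to $F'+\gamma_0F$ and then uses $\sum_i\|U_i\|^2\le F$; both routes give the same rate $\gamma_*$.
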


\begin{proof}
The voltage--recovery calculation in the proof of Lemma~\ref{lem:kzero}, with
the observational-noise forcing included, gives
\[
F'+2\omega b\sum_i\norm{W_i}^2
+2\delta\sum_i\norm{U_i}^2
\le 2\mu\sum_i |(I_h\xi_i,U_i)|.
\]
Using $\norm{I_h\xi_i}\le c_0\norm{\xi_i}$ and Young's inequality,
\[
2\mu |(I_h\xi_i,U_i)|
\le \delta\norm{U_i}^2+\frac{\mu^2c_0^2}{\delta}\norm{\xi_i}^2.
\]
Therefore
\begin{equation}
F'+2\omega b\sum_i\norm{W_i}^2
+\delta\sum_i\norm{U_i}^2
\le \frac{\mu^2c_0^2}{\delta}\mathcal N(t).
\label{eq:kzero-noise-F}
\end{equation}
In particular,
\[
F'+\gamma_0F\le \frac{\mu^2c_0^2}{\delta}\mathcal N(t),
\qquad \gamma_0=\min\{2b,\delta\}.
\]
The memductance equation gives
\[
G'=2q\sum_i(U_i,R_i)-2rG
\le \frac{q^2}{r}\sum_i\norm{U_i}^2-rG
\le \frac{q^2}{r}F-rG.
\]
Multiplying this inequality by $\zeta$ and adding it to \eqref{eq:kzero-noise-F}
yields
\[
H'\le \frac{\mu^2c_0^2}{\delta}\mathcal N(t)
-\left(\gamma_0-\frac{\zeta q^2}{r}\right)F-\zeta rG.
\]
With $\zeta=r\gamma_0/(2q^2)$, the coefficient of $F$ is $\gamma_0/2$.
Thus
\[
H'+\gamma_*H\le \frac{\mu^2c_0^2}{\delta}\mathcal N(t),
\qquad \gamma_* = \min\{\gamma_0/2,r\}.
\]
The integrating factor $e^{\gamma_*t}$ gives the asserted convolution estimate.
If $\mathcal N(s)\le\mathcal N_*$ for almost every $s\ge S$, then the part of the
convolution over $[0,S]$ tends to zero, while the part over $[S,t]$ is bounded by
\[
\mathcal N_*\int_S^t e^{-\gamma_*(t-s)}\,ds
\le \frac{\mathcal N_*}{\gamma_*}.
\]
Finally, since
\[
H=F+\zeta G\ge \min\{1,\omega,\zeta\}\E,
\]
the displayed limsup bound for $\E(t)$ follows.
\end{proof}

\section{Conclusion}
In this study, we have established exponential convergence of a voltage-only
continuous data assimilation algorithm for partly diffusive memristive
FitzHugh--Nagumo networks. Although the feedback acts only on the membrane
potentials, the full state, including the unobserved recovery and memductance
variables, is recovered in the energy norm whenever the relevant convergence
margin is positive. These results cover full voltage observations, orthogonal
coarse observations, and general coarse interpolants. For locally
square-integrable observation noise, we obtain a convolution bound for the
error and an asymptotic error floor when the noise energy is eventually
bounded. In the non-memristive case \(k=0\)
(Lemmas~\ref{lem:kzero} and \ref{lem:kzero-noise}), the memductance variable is
driven by the voltage and can be recovered directly, without an \(L^4\)
radius. In dimension three, the convergence results hold trajectory-wise for
general \(L^2\) energy data. When the initial voltages of both the reference
and the assimilated solutions lie in \(L^4(\Omega)\), the truncated \(L^4\)
estimates give the \(L^8(0,T;L^4(\Omega))\) voltage condition used in the
three-dimensional uniqueness arguments.

Two main techniques are used in the proofs. First, the weighted energy with
\(\omega=\sigma/a\) cancels the voltage--recovery cross term, so the recovery
error is controlled through its own damping. Second, the memristive error term
involving the nondiffusive variable \(R_i\) is handled by combining the
absorbing \(L^4\) bound for the reference voltage with the interpolation-based
estimate in Lemma~\ref{lem:mem}.

The resulting thresholds are sufficient conditions and are not expected to be
sharp. In particular, for fixed physical parameters and fixed \(k>0\), they
grow like \(O(m^2)\) with the number of neurons, because the total network
\(L^4\) bound is used as a per-neuron bound. Obtaining sharper per-neuron
absorbing estimates remains an open question. Future work also includes
discrete-time observations, measurements from only part of the network, and
stochastic noise.

\end{document}